\newcommand{\aside}[1]{\marginnote{\scriptsize{#1}}[0cm]}
\newcommand{\aaside}[2]{\marginnote{\scriptsize{#1}}[#2]}
\newcommand\Emph[1]{\emph{#1}\aside{#1}}
\newcommand\EmphE[2]{\emph{#1}\aaside{#1}{#2}}
\newtheorem{lem}{Lemma}
\newtheorem{conj}{Conjecture}
\newtheorem*{struct-lem}{Structural Lemma}
\newtheorem{cor}[lem]{Corollary}
\newtheorem*{main}{Main Theorem}
\newcommand\Bconf{B_{conf}}
\newcommand\C{\mathcal{C}}
\renewcommand\O{\mathcal{O}}
\renewcommand\S{\mathcal{S}}
\newcommand\W{\mathcal{W}}
\newcommand\vph{\varphi}
\newcommand\card[1]{|#1|}
\newcommand\ceil[1]{\lceil#1\rceil}
\newcommand\floor[1]{\lfloor#1\rfloor}
\newcommand\irange[1]{[#1]}
\newcommand\nf{n_{5}}
\newcommand\Mns{n_{6}}
\DeclareRobustCommand\SMALLvert{%
\begin{tikzpicture}
\tikzstyle{tVert}=[shape = circle, minimum size = 5pt, inner sep = 0pt, outer sep = 0pt, draw]
\node[tVert]{};
\end{tikzpicture}
}
\DeclareRobustCommand\Tvert{%
\begin{tikzpicture}
\tikzstyle{SVert}=[shape = circle, minimum size = 5pt, inner sep = 0pt, outer sep = 0pt, draw, fill]
\node[SVert]{};
\end{tikzpicture}
}
\DeclareRobustCommand\BIGvert{%
\begin{tikzpicture}
\tikzstyle{BVert}=[shape = rectangle, minimum size = 5pt, inner sep = 0pt, outer
sep = 0pt, draw, fill]
\node[BVert]{};
\end{tikzpicture}
}
\begin{document}
\title{Acyclic edge-coloring of planar graphs:\\ $\Delta$ colors suffice when
$\Delta$ is large}
\author{Daniel W. Cranston\thanks{Department of Mathematics and Applied
Mathematics, Viriginia Commonwealth University, Richmond, VA;
\texttt{dcranston@vcu.edu}; 
This research is partially supported by NSA Grant H98230-15-1-0013.}
}
\maketitle
\begin{abstract}
An \emph{acyclic edge-coloring} of a graph $G$ is a proper edge-coloring of $G$
such that the subgraph induced by any two color classes is acyclic.
The \emph{acyclic chromatic index}, $\chi'_a(G)$, is the smallest number of
colors allowing an acyclic edge-coloring of $G$.
Clearly $\chi'_a(G)\ge
\Delta(G)$ for every graph $G$.  Cohen, Havet, and M\"{u}ller conjectured that
there exists a constant $M$ such that every planar graph with $\Delta(G)\ge M$
has $\chi'_a(G)=\Delta(G)$.  We prove this conjecture.
\end{abstract}

\section{Introduction}

A \EmphE{proper edge-coloring}{-4mm} of a graph $G$ assigns colors to the edges of $G$
such that two edges receive distinct colors whenever they have an endpoint in
common.  An \EmphE{acyclic edge-coloring}{2mm} is a proper edge-coloring such that the
subgraph induced by any two color classes is acyclic (equivalently, the edges of
each cycle receive at least three distinct colors).  
The \EmphE{acyclic chromatic index}{3.5mm}, $\chi'_a(G)$, is the smallest number of
colors allowing an acyclic edge-coloring of $G$.
In an edge-coloring $\vph$, if a color $\alpha$ is used incident to a vertex
$v$, then $\alpha$ is \EmphE{seen by}{8mm} $v$.
For the maximum degree of $G$, we write $\Delta(G)$, and simply $\Delta$ when
the context is clear.  Note that $\chi'_a(G)\ge \Delta(G)$ for every graph $G$. 
When we write \emph{graph}, we forbid loops and multiple edges.  A \EmphE{planar
graph}{2mm} is one that can be drawn in the plane with no edges crossing.
A \EmphE{plane graph}{5mm} is a planar embedding of a planar graph.
Cohen, Havet, and M\"{u}ller~\cite{CHM, BCCHM} conjectured that there exists a
constant $M$ such that every planar graph with $\Delta(G)\ge M$ has
$\chi'_a(G)=\Delta(G)$.  We prove this conjecture.

\begin{main}
All planar graphs $G$ satisfy $\chi'_a(G)\le \max\{\Delta,4.2*10^{14}\}$.
Thus, $\chi'_a(G)=\Delta$ for all planar graphs $G$ with $\Delta\ge 4.2*10^{14}$.
\end{main}

We start by reviewing the history of acyclic coloring and
acyclic edge-coloring. 
An \Emph{acyclic coloring} of a graph $G$ is a proper vertex coloring of $G$
such that the subgraph induced by any two color classes is acyclic.  The
smallest number of colors that allows an acyclic coloring of $G$ is the
\Emph{acyclic chromatic number}, $\chi_a(G)$.  This concept was introduced in
1973 by Gr\"{u}nbaum~\cite{grunbaum-acyclic}, who conjectured that
every planar graph $G$ has $\chi_a(G)\le 5$.  This is best possible, as shown
(for example) by the octahedron.  After a flurry of activity, Gr\"{u}nbaum's
conjecture was confirmed in 1979 by Borodin~\cite{borodin-acyclic}.  
This result contrasts sharply with the behavior of $\chi_a(G)$ for a general
graph $G$.  Alon, McDiarmid, and Reed~\cite{AMR} found a constant $C_1$
such that for every $\Delta$ there exists a graph $G$ with maximum degree
$\Delta$ and $\chi_a(G)\ge C_1 \Delta^{4/3}(\log \Delta)^{-1/3}$.  This
construction is nearly best possible, since they also found a constant $C_2$
such that $\chi_a(G)\le C_2\Delta^{4/3}$ for every graph $G$ with maximum degree
$\Delta$.
The best known upper bound is $\chi_a(G)\le 2.835\Delta^{4/3}+\Delta$, due to
Sereni and Volec~\cite{SV}.


Now we turn to acyclic edge-coloring.
In contrast to the results above,
there does exist a constant $C_3$ such that $\chi'_a(G)\le
C_3\Delta$ for every graph $G$ with maximum degree $\Delta$.
Using the Asymmetric Local Lemma, Alon, McDiarmid, and Reed~\cite{AMR} 
showed that we can take $C_3=64$.  This constant has been improved
repeatedly, and the 
current best bound is $2$, due to Kirousis and Livieratos~\cite{KL}.  
But this upper bound is still far from the conjectured actual value.

\begin{conj}
\label{conj1}
Every graph $G$ satisfies $\chi'_a(G)\le \Delta+2$.
\end{conj}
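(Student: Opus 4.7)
The plan is to prove Conjecture~\ref{conj1} by induction on $\card{E(G)}$. Given $G$ with maximum degree $\Delta$, I would delete an edge $uv$, apply the induction hypothesis to obtain an acyclic edge-coloring $\vph$ of $G-uv$ with $\Delta+2$ colors, and then attempt to extend $\vph$ to color $uv$. Each of $u$ and $v$ has at most $\Delta-1$ other incident edges, so at least three colors are missing at $u$ and at least three at $v$; hence by inclusion-exclusion at least one color is typically missing at both endpoints, giving a reasonable pool of candidates.

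First I would dispose of the easy case: some color $\alpha$ is missing at both $u$ and $v$, and for every color $\beta$ seen at $u$ or $v$ the coloring $\vph$ contains no $(\alpha,\beta)$-bichromatic $u,v$-path. Assigning $\alpha$ to $uv$ then keeps the coloring proper and creates no bichromatic cycle through $uv$. Otherwise, I would invoke a Vizing-style fan at $u$: build a sequence $uw_1,uw_2,\ldots$ of incident edges with $\vph(uw_{i+1})$ missing at $w_i$, and rotate colors along this fan to liberate a suitable color. The final tool is a Kempe-chain swap: given $\alpha$ missing at $u$ and $\beta$ missing at $v$, flip colors on the $(\alpha,\beta)$-component containing $v$ to reduce to the easy case.

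The main obstacle, and the reason this conjecture has remained open since Fiam\v{c}\'{\i}k's 1978 formulation, is exactly the Kempe-swap step. Unlike Vizing's theorem, where the constraint is purely local properness, acyclicity is a \emph{global} condition: swapping colors on the $(\alpha,\beta)$-component at $v$ may merge two previously disjoint $(\alpha,\gamma)$-bichromatic trees for some third color $\gamma$, creating a bichromatic cycle far from $uv$. My plan to control this is to track a potential function measuring the combined complexities of all bichromatic subgraphs and to show that some carefully chosen swap strictly decreases this potential; alternatively, one might deploy an entropy-compression argument in the Moser--Tardos style that explicitly records bichromatic cycle witnesses rather than bounding bad events in bulk. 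Since no argument of either flavor has yet closed the gap between the $3.74\Delta$ bound of~\cite{GKPT} and the target $\Delta+2$, I expect the real difficulty lies in constructing an invariant simultaneously sensitive enough to detect newly created bichromatic cycles and robust enough to be restored by local moves; the techniques in the present paper handle only the planar case with very large $\Delta$ and appear to rely on planarity in an essential way.
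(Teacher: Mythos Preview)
The statement you are attempting to prove is Conjecture~\ref{conj1}, which the paper does \emph{not} prove; it is stated as an open problem, attributed to Fiam\v{c}\'{i}k (1978) and Alon--Sudakov--Zaks (2001), and remains open. There is therefore no ``paper's own proof'' to compare against. The paper's Main Theorem concerns only planar graphs with very large $\Delta$ and establishes $\chi'_a(G)=\Delta$ in that regime via discharging and reducibility of four specific configurations; none of this machinery is claimed to bear on the general conjecture.

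Your proposal is not a proof but an outline of why the natural Vizing-style attack fails, and you say so yourself. The concrete gap is exactly where you locate it: a Kempe swap on an $(\alpha,\beta)$-component can create new bichromatic cycles in colors $\{\alpha,\gamma\}$ or $\{\beta,\gamma\}$ for arbitrary $\gamma$, so the swap is not a legal move in the acyclic setting. Your suggested remedies---a potential function on bichromatic subgraphs, or an entropy-compression argument---are speculative; you neither define the potential nor show it decreases, and entropy compression as currently understood yields bounds linear in $\Delta$ with a constant strictly greater than~1 (the $3.74\Delta$ of Giotis et al.\ that you cite). Until one of these is made precise and shown to terminate, the argument does not establish $\chi'_a(G)\le\Delta+2$, and indeed no such argument is known.
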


Conjecture~\ref{conj1} was posed by Fiam\v{c}\'{i}k~\cite{fiamcik} in 1978, and again
by Alon, Sudakov, and Zaks~\cite{ASZ} in 2001.
The value $\Delta+2$ is best possible, as shown (for example) by $K_n$ when $n$
is even. In an acyclic edge-coloring at most one color class can be a perfect
matching; otherwise, two perfect matchings will induce some cycle, by the
Pigeonhole principle.  Now the lower bound $\Delta+2$ follows from an easy counting
argument.

For planar graphs, the best upper bounds are much closer to the conjectured
value.  Cohen, Havet, and M\"{u}ller~\cite{CHM}
proved $\chi'_a(G)\le \Delta+25$ whenever $G$ is planar.  The constant 25 has
been frequently improved~\cite{BC, BCCHM, GHY, WSW, WZ}.  The current best bound is
$\chi'_a(G)\le \Delta+6$, due to Wang and Zhang~\cite{WZ}.  However, for planar
graphs with $\Delta$ sufficiently large, Conjecture~\ref{conj1} can be 
strengthened further.  This brings us to the previously mentioned conjecture of
Cohen, Havet, and M\"{u}ller~\cite{CHM}.

\begin{conj}
\label{conj2}
There exists a constant $M$ such that if $G$ is planar and $\Delta\ge M$, then
$\chi'_a(G)=\Delta$.
\end{conj}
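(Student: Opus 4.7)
The plan is to proceed by minimum counterexample combined with the discharging method, following the template used in the existing $\chi'_a\le\Delta+c$ bounds of Cohen--Havet--M\"{u}ller and the subsequent improvements. Let $G$ be a plane graph with $\Delta(G)\ge M$ (with $M$ a huge constant to be determined) admitting no acyclic edge-coloring with $\Delta$ colors, chosen minimum with respect to edges and then vertices. I would state a Structural Lemma listing a finite collection of unavoidable local configurations, and prove it by assigning each vertex $v$ charge $d(v)-4$ and each face $f$ charge $|f|-4$ so that by Euler's formula the total charge is $-8$, and then redistributing via rules that push charge from large-degree vertices and large faces toward small vertices and triangles until the assumption that none of the listed configurations appears would force every vertex and face to have nonnegative final charge, a contradiction.

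For the discharging half to succeed, I would partition vertices into \emph{small} (degree below a threshold $T_1$ independent of $\Delta$), \emph{medium} (degree between $T_1$ and some $T_2$), and \emph{big} (degree at least $T_2$, with $T_2$ much smaller than $M$). Because $\Delta\ge M$ is enormous, big vertices carry a lot of positive charge and absorb the burden of balancing the account. The forced configurations will typically be of three kinds: (i)~an edge joining two vertices both of bounded degree, (ii)~a small vertex whose neighborhood contains a prescribed pattern of small or medium vertices, or (iii)~a short face bounded entirely by small or medium vertices.

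The reducibility half is the technical heart. For each listed configuration I would delete a carefully chosen edge $uv$ (or a small set of edges incident to the configuration) to obtain $G'$, use minimality to get an acyclic edge-coloring $\vph'$ of $G'$ with at most $\Delta$ colors, and then extend $\vph'$ to $G$. The extension must (a)~be proper at $u$ and $v$ and (b)~create no bichromatic cycle, which for a candidate color $\alpha$ on $uv$ means ruling out, for each color $\beta$ seen at both $u$ and $v$, any $\alpha\beta$-alternating path from $u$ to $v$ in $\vph'$. Counting at a small endpoint of $uv$ is easy because the pool of colors missing there has size about $\Delta$, while each obstruction removes only $O(1)$ colors; the difficulty is that some colors may be reserved for other edges of the configuration and some may be forbidden by long alternating paths through intermediate low-degree vertices. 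When direct selection fails, Kempe-chain swaps along suitable alternating paths can reconfigure $\vph'$ to open up a usable color before $uv$ is colored.

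The main obstacle will be making the reducibility proofs work when the configuration contains medium-degree vertices. For a small vertex of bounded degree, counting easily defeats the constantly many alternating-path obstructions. For a medium vertex $v$ whose degree can be a nontrivial function of $\Delta$, the pool of colors missing at $v$ shrinks while each alternating-path obstruction can forbid many candidate colors; one must exploit planarity and the precise local structure very carefully, tracking which bichromatic subgraphs can contain long paths through the configuration and neutralising the bad color classes by Kempe swaps or by preconditioning the choices made at neighboring vertices. Balancing the thresholds $T_1$ and $T_2$ against the number of obstructions absorbed in each reducibility proof, and against the discharging deficits they must repair, is what drives the constant up to the stated $4.2\cdot 10^{14}$.
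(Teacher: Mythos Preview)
Your overall framework---minimal counterexample, discharging, reducibility---matches the paper, but the list of configurations you propose will not work, and this is a genuine gap rather than a matter of bookkeeping. Borodin, Broersma, Glebov, and van den Heuvel construct plane triangulations (truncate a dodecahedron, subdivide the edges on two $10$-faces many times, then add a hub into every $4^+$-face) in which every $5^-$-vertex has \emph{two} $\Delta$-neighbors. In such a graph every small vertex is wedged between two huge ones, and your single-edge-deletion reductions, which work for the $\Delta+c$ theorems, break down with exactly $\Delta$ colors: after deleting an edge $uv$ between two $4$-vertices, the common colors at $u$ and $v$ route alternating paths through $\Delta$-vertices, and the obstruction count is $\Theta(\Delta)$, not $O(1)$. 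So either your structural lemma is false (if you exclude such edges from the list) or your reducibility fails (if you include them). The paper explicitly flags this construction as the reason a lemma built only from (RC1)/(RC2)-type configurations cannot hold.

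The missing idea is the \emph{bunch}: a maximal run $x_1,\dots,x_t$ of $4^-$-vertices that are consecutive common neighbors of two big vertices $v,w$, with each $4$-cycle $vx_iwx_{i-1}$ nonseparating. The paper's Structural Lemma adds configurations (RC3) and (RC4) asserting that some big (respectively very big) vertex has almost all of its neighbors lying in bunches. Their reducibility is not edge-by-edge either: one deletes \emph{all} horizontal edges $x_ix_{i+1}$ of every long bunch simultaneously, acyclically colors the smaller graph, and then \emph{reorders the threads} $vx_iw$ inside each long bunch (via a Hall's-theorem matching in an auxiliary bipartite graph on threads versus positions) so that consecutive threads share no color; the horizontal edges are then colored greedily. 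A further swapping argument at the big parent $v$, built around a carefully defined set $\C_{good}(v)$, repairs any $2$-colored cycles through the few non-bunch neighbors. Your Kempe-swap intuition is in the right spirit for that last step, but the bunch structure and the thread-reordering lemma are the essential ingredients your plan lacks.
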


Our Main Theorem confirms Conjecture~\ref{conj2}.
%
For the proof we consider a hypothetical counterexample. Among all
counterexamples we choose one with the fewest vertices, a \emph{minimal
counterexample}\aside{minimal counter- example}.  In Section~\ref{structural-sec} we prove our Structural Lemma,
which says that every 2-connected plane graph contains one of four
configurations.  In Section~\ref{reducibility-sec} we show that every minimal
counterexample $G$ must be 2-connected, and that $G$ cannot contain any of
these four configurations.  This shows that no minimal counterexample exists,
which finishes the proof of the Main Theorem.

\section{The Structural Lemma}
\label{structural-sec}

A vertex $v$ is \EmphE{big}{-3mm} if $d(v)\ge 8680$.
\aside{$k/k^+/k^-$-vertex} 
For a graph $G$, a vertex $v$ is \Emph{very big} if $d(v)\ge \Delta-4(8680)$.
A \emph{$k$-vertex} 
(resp.~\emph{$k^+$-vertex} and
\emph{$k^-$-vertex}) is a vertex of degree $k$ (resp.~at least $k$ and at most
$k$).  For a vertex $v$, a \emph{$k$-neighbor} is an adjacent $k$-vertex;
\emph{$k^+$-neighbors} and \emph{$k^-$-neighbors} are defined analogously. 
Similarly, we define \emph{$k$-faces}, \emph{$k^+$-faces}, and \emph{$k^-$-faces}.  
\aaside{$k/k^+/k^-$-face}{-7mm}
For the length of a face $f$, we write \EmphE{$\ell(f)$}{-.25mm}.

A key structure in our proof, called a \Emph{bunch}, consists of two big
vertices with many common $4^-$-neighbors that are embedded as successive
neighbors (for both big vertices); see Figure~\ref{fig:bunch} for an example. 
Let $x_0,\ldots,x_{t+1}$ denote successive neighbors of a big vertex
$v$, that are also successive for a big vertex $w$.  We require that $d(x_i)\le
4$ for all $i\in\irange{t}$, where $\irange{t}$ denotes $\{1,\ldots,t\}$.
Further, for each $i\in\irange{t+1}$, we require that the 4-cycle
$vx_iwx_{i-1}$ is not separating; so, either the cycle bounds a 4-face, or it
bounds the two 3-faces $vx_ix_{i-1}$ and $wx_ix_{i-1}$.  
For such a bunch, we call $x_1,\ldots, x_t$ its \EmphE{bunch
vertices}{-7mm}, and we call $v$ and $w$ the \Emph{parents} of the bunch.  (When we
refer to a bunch, we typically mean a maximal bunch.)
For technical reasons, we exclude $x_0$ and $x_{t+1}$ from the bunch.  
Thus, each 4-vertex in a bunch is incident to four 3-faces, each 3-vertex in a bunch
is incident to a 4-face and two 3-faces, and each 2-vertex in a bunch is
incident to two 4-faces. 
The
\Emph{length} of the bunch is $t$.  
A \emph{horizontal edge}\aaside{horizon- tal edge}{4.3mm} is any
edge $x_ix_{i+1}$, with $1\le i\le t-1$. 
Each path $vx_iw$ is a \EmphE{thread}{-.5mm}.

\begin{figure}
\centering
\begin{tikzpicture}[scale = 8]
\tikzstyle{VertexStyle} = []
\tikzstyle{EdgeStyle} = []
\tikzstyle{labeledStyle}=[shape = circle, minimum size = 15pt, inner sep = 1.2pt]
\tikzstyle{unlabeledStyle}=[shape = circle, minimum size = 6pt, inner sep = 1.2pt, draw, fill]
\Vertex[style = unlabeledStyle, x = 0.250, y = 0.650, L = \small {}]{v0}
\Vertex[style = unlabeledStyle, x = 0.350, y = 0.650, L = \tiny {}]{v1}
\Vertex[style = unlabeledStyle, x = 0.450, y = 0.650, L = \tiny {}]{v2}
\Vertex[style = unlabeledStyle, x = 0.550, y = 0.650, L = \tiny {}]{v3}
\Vertex[style = unlabeledStyle, x = 0.650, y = 0.650, L = \tiny {}]{v4}
\Vertex[style = unlabeledStyle, x = 0.150, y = 0.650, L = \small {}]{v5}
\Vertex[style = labeledStyle, x = 0.103, y = 0.650, L = \small {$x_0$}]{v55}
\Vertex[style = unlabeledStyle, x = 0.750, y = 0.650, L = \tiny {}]{v6}
\Vertex[style = unlabeledStyle, x = 0.850, y = 0.650, L = \tiny {}]{v7}
\Vertex[style = unlabeledStyle, x = 0.950, y = 0.650, L = \small {}]{v8}
\Vertex[style = unlabeledStyle, x = 1.050, y = 0.650, L = \small {}]{v9}
\Vertex[style = labeledStyle, x = 1.12, y = 0.650, L = \small {$x_{t+1}$}]{v95}
\Vertex[style = unlabeledStyle, x = 0.600, y = 0.950, L = \small {}]{v10}
\Vertex[style = labeledStyle, x = 0.600, y = 0.989, L = \small {$v$}]{v105}
\Vertex[style = unlabeledStyle, x = 0.600, y = 0.350, L = \small {}]{v11}
\Vertex[style = labeledStyle, x = 0.600, y = 0.311, L = \small {$w$}]{v115}
\Edge[label = \tiny {}, labelstyle={auto=right, fill=none}](v0)(v10)
\Edge[label = \tiny {}, labelstyle={auto=right, fill=none}](v1)(v10)
\Edge[label = \tiny {}, labelstyle={auto=right, fill=none}](v2)(v10)
\Edge[label = \tiny {}, labelstyle={auto=right, fill=none}](v3)(v10)
\Edge[label = \tiny {}, labelstyle={auto=right, fill=none}](v4)(v10)
\Edge[label = \tiny {}, labelstyle={auto=right, fill=none}](v5)(v10)
\Edge[label = \tiny {}, labelstyle={auto=right, fill=none}](v6)(v10)
\Edge[label = \tiny {}, labelstyle={auto=right, fill=none}](v7)(v10)
\Edge[label = \tiny {}, labelstyle={auto=right, fill=none}](v8)(v10)
\Edge[label = \tiny {}, labelstyle={auto=right, fill=none}](v9)(v10)
\Edge[label = \tiny {}, labelstyle={auto=right, fill=none}](v0)(v11)
\Edge[label = \tiny {}, labelstyle={auto=right, fill=none}](v1)(v11)
\Edge[label = \tiny {}, labelstyle={auto=right, fill=none}](v2)(v11)
\Edge[label = \tiny {}, labelstyle={auto=right, fill=none}](v3)(v11)
\Edge[label = \tiny {}, labelstyle={auto=right, fill=none}](v4)(v11)
\Edge[label = \tiny {}, labelstyle={auto=right, fill=none}](v5)(v11)
\Edge[label = \tiny {}, labelstyle={auto=right, fill=none}](v6)(v11)
\Edge[label = \tiny {}, labelstyle={auto=right, fill=none}](v7)(v11)
\Edge[label = \tiny {}, labelstyle={auto=right, fill=none}](v8)(v11)
\Edge[label = \tiny {}, labelstyle={auto=right, fill=none}](v9)(v11)
\Edge[label = \tiny {}, labelstyle={auto=right, fill=none}](v0)(v5)
\Edge[label = \tiny {}, labelstyle={auto=right, fill=none}](v0)(v1)
\Edge[label = \tiny {}, labelstyle={auto=right, fill=none}](v4)(v3)
\Edge[label = \tiny {}, labelstyle={auto=right, fill=none}](v4)(v6)
\Edge[label = \tiny {}, labelstyle={auto=right, fill=none}](v7)(v8)
\Edge[label = \tiny {}, labelstyle={auto=right, fill=none}](v9)(v8)
\end{tikzpicture}
\caption{A bunch, with $v$ and $w$ as its parents.\label{fig:bunch}}
\end{figure}
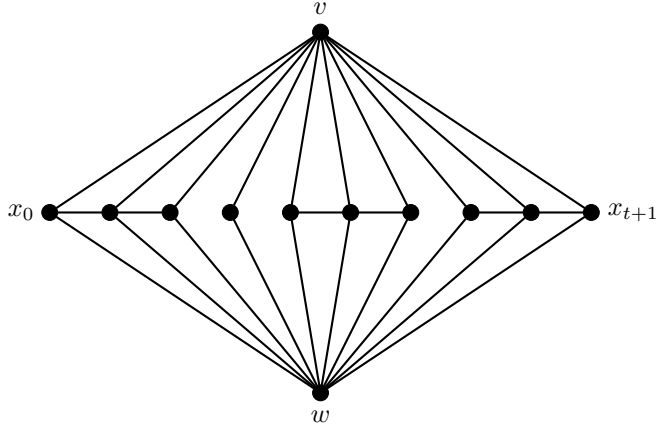

Borodin et al.~\cite{BBGvdH2} constructed graphs in which every $5^-$-vertex has at
least two big neighbors.  Begin with a truncated dodecahedron, and subdivide
$t$ times each edge that lies on two 10-faces.  Now add a new vertex into every
$4^+$-face, making it adjacent to every vertex on the face boundary.  The
resulting plane triangulation has $\Delta=5t+10$, minimum degree 4, and every
$5^-$-vertex has two $\Delta$-neighbors.  This final fact motivates our
Structural Lemma, by showing that if we omit from it (C3) and (C4), then the
resulting statement is false.  (For illustrating that we cannot omit both (C3)
and (C4), the above construction can be
generalized.  Rather than truncating a dodecahedron, we can start by truncating
any 3-connected plane graph with all faces of length 5 or 6; the rest of the
construction is the same.)  Now we state and prove our Structural Lemma.

\begin{struct-lem}
Let $G$ be a 2-connected plane graph.  Let $k=\max\{\Delta,5(8680)\}$\aside{$k$}.
Now $G$ contains one of the following four configurations:
\begin{enumerate}
\item[(C1)] a vertex $v$ such that $\sum_{w\in N(v)}d(w)\le k$; or

\item[(C2)] a big vertex $v$ such that among those
$5^-$-vertices which have $v$ as their unique big neighbor 
the number of 
(i) 2-vertices is at least 8889 or 
(ii) $3^-$-vertices is at least 17655 or
(iii) $4^-$-vertices is at least 26401 or 
(iv) $5^-$-vertices is at least 35137; or

\item[(C3)] a big vertex $v$ such that $\nf+2\Mns\le 35$, where $\nf$ and
$\Mns$\aside{$\nf, \Mns$}
denote the number of $5^-$-neighbors and $6^+$-neighbors of $v$ that are in no
bunch with $v$ as a parent; or

\item[(C4)] a very big vertex $v$ such that $\nf+2\Mns\le 141415$, where $\nf$
and $\Mns$ denote the number of $5^-$-neighbors and $6^+$-neighbors of $v$ that
are in no bunch with $v$ as a parent. 

\end{enumerate}
\end{struct-lem}

\begin{proof}
We use discharging, assigning charge $d(v)-6$ to each vertex $v$ and charge
$2\ell(f)-6$ to each face $f$.  By Euler's formula, the sum of these charges is
$-12$.  We assume that $G$ contains none of the four configurations and
redistribute charge so that each vertex and face ends with nonnegative charge,
a contradiction.  We use the following three discharging rules.
\begin{enumerate}
\item[(R1)] 
Let $v$ be a $5^-$-vertex.
If $v$ has a single big neighbor $w$, then $v$ takes $6-d(v)$ from $w$.
If $v$ is in a bunch, then $v$ takes 1 from each parent of the bunch.
If $v$ has exactly two big neighbors, and they are not its parents in a bunch,
then $v$ takes $\frac12$ from each of these big neighbors.

\item[(R2)] Let $v$ be a $5^-$-vertex with a big neighbor $w$, and let $vw$ lie
on a face $f$.  If $\ell(f)=4$, then $v$ takes 1 from $f$.  If $\ell(f)\ge 5$
and $v$ has a second big neighbor along $f$, then $v$ takes 2 from $f$.
Otherwise, if $\ell(f)\ge 5$, then $v$ takes 1 from $f$.

\item[(R3)] Every $5^-$-vertex on a 3-face with two big neighbors takes 2 from a
central ``bank''\aside{bank}; each big vertex gives 12 to the bank.
\end{enumerate}

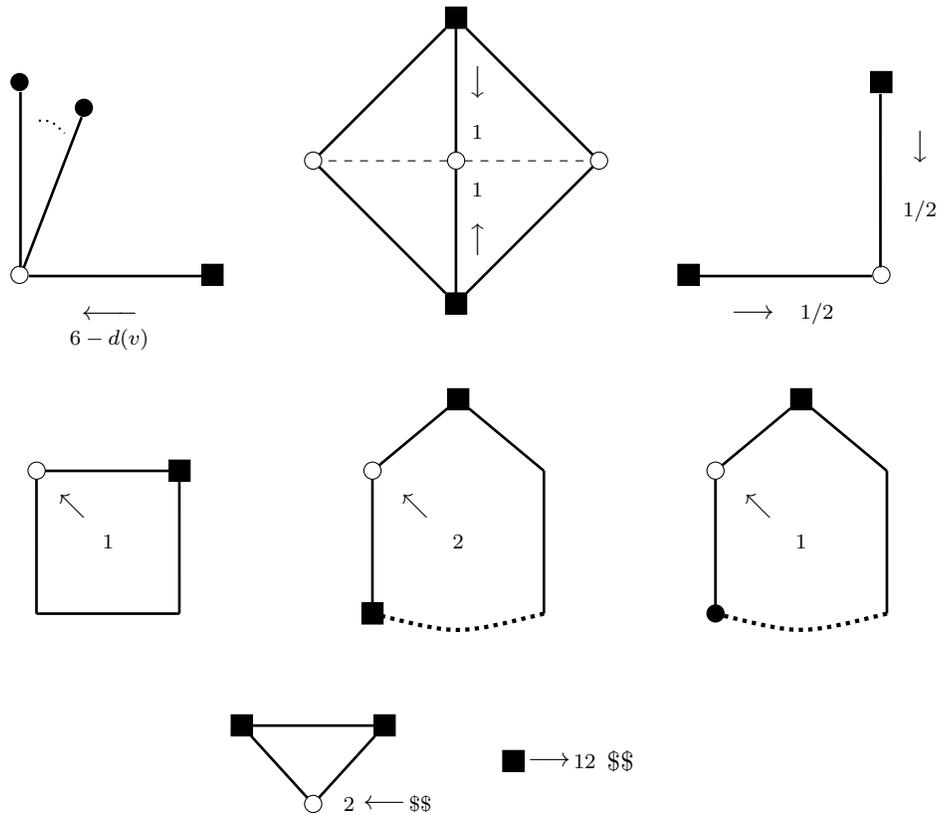
\begin{figure}

\centering
\tikzstyle{BVert}=[shape = rectangle, minimum size = 8pt, inner sep = 0pt, outer
sep = 0pt, draw, fill]
\tikzstyle{tVert}=[shape = circle, minimum size = 6.5pt, inner sep = 0pt, outer sep = 0pt, draw]
\tikzstyle{SVert}=[shape = circle, minimum size = 6.5pt, inner sep = 0pt, outer sep = 0pt, draw, fill]
\tikzstyle{uVert}=[shape = circle, minimum size = 0pt, inner sep = 0pt, outer sep = 0pt]

%

\begin{tikzpicture}[scale = 9.5]
\clip (0.05,1) rectangle (1.475,-.25);
\tikzstyle{VertexStyle} = []
\tikzstyle{EdgeStyle} = [line width=1pt]

\begin{scope}[xshift=.25cm] 
\Vertex[style = BVert, x = 0.500, y = 0.900, L = \small {}]{v0}
\Vertex[style = tVert, x = 0.500, y = 0.700, L = \small {}]{v1}
\Vertex[style = tVert, x = 0.300, y = 0.700, L = \small {}]{v2}
\Vertex[style = tVert, x = 0.700, y = 0.700, L = \small {}]{v3}
\Vertex[style = BVert, x = 0.500, y = 0.500, L = \small {}]{v4}
\Vertex[style = uVert, x = 0.530, y = 0.810, L = \small {$\big\downarrow$}]{v5}
\Vertex[style = uVert, x = 0.530, y = 0.740, L = \scriptsize {$1$}]{v55}
\Vertex[style = uVert, x = 0.530, y = 0.590, L = \small {$\big\uparrow$}]{v6}
\Vertex[style = uVert, x = 0.530, y = 0.660, L = \scriptsize {$1$}]{v55}
\Edge[label = \small {}, labelstyle={auto=right, fill=none}](v1)(v0)
\Edge[label = \small {}, labelstyle={auto=right, fill=none}](v2)(v0)
\Edge[label = \small {}, labelstyle={auto=right, fill=none}](v3)(v0)
\Edge[label = \small {}, labelstyle={auto=right, fill=none}](v1)(v4)
\Edge[label = \small {}, labelstyle={auto=right, fill=none}](v2)(v4)
\Edge[label = \small {}, labelstyle={auto=right, fill=none}](v3)(v4)
\draw[dashed](v1)--(v2);
\draw[dashed](v1)--(v3);

\begin{scope}[xshift=.43,scale=1.8,yshift=-.45cm]
\Vertex[style = BVert, x = 0.600, y = 0.900, L = \small {}]{v0}
\Vertex[style = tVert, x = 0.600, y = 0.750, L = \small {}]{v1}
\Vertex[style = BVert, x = 0.450, y = 0.750, L = \small {}]{v2}
\Vertex[style = uVert, x = 0.630, y = 0.850, L = \small {$\big\downarrow$}]{v3}
\Vertex[style = uVert, x = 0.630, y = 0.800, L = \scriptsize {$1/2$}]{v35}
\Vertex[style = uVert, x = 0.500, y = 0.720, L = \small {$\longrightarrow$}]{v4}
\Vertex[style = uVert, x = 0.550, y = 0.720, L = \scriptsize {$1/2$}]{v45}
\Edge[label = \small {}, labelstyle={auto=right, fill=none}](v0)(v1)
\Edge[label = \small {}, labelstyle={auto=right, fill=none}](v2)(v1)
\begin{scope}[xshift=.53cm,xscale=-1]
\Vertex[style=uVert, x=.640, y=.600, L = {$~$}]{}
\Vertex[style = SVert, x = 0.600, y = 0.900, L = \small {}]{v0}
\Vertex[style = SVert, x = 0.550, y = 0.88, L = \small {}]{v01}
\Vertex[style = tVert, x = 0.600, y = 0.750, L = \small {}]{v1}
\Vertex[style = BVert, x = 0.450, y = 0.750, L = \small {}]{v2}
\Vertex[style = uVert, x = 0.530, y = 0.720, L = \small
{$\longleftarrow\mkern-10mu-$}]{v4}
\Vertex[style = uVert, x = 0.530, y = 0.700, L = \scriptsize {$6-d(v)$}]{v45}
\draw[dotted, thick] (.585,.87) .. controls (.575,.87) .. (.565,.86);
\Edge[label = \small {}, labelstyle={auto=right, fill=none}](v0)(v1)
\Edge[label = \small {}, labelstyle={auto=right, fill=none}](v1)(v01)
\Edge[label = \small {}, labelstyle={auto=right, fill=none}](v2)(v1)

\end{scope}
\end{scope}
\end{scope}


\begin{scope}[yshift=-.23in,xshift=-.337cm]
\Vertex[style = tVert, x = 0.500, y = 0.850, L = \small {}]{v0}
\Vertex[style = BVert, x = 0.700, y = 0.850, L = \small {}]{v1}
\Vertex[style = uVert, x = 0.700, y = 0.650, L = \small {}]{v2}
\Vertex[style = uVert, x = 0.500, y = 0.650, L = \small {}]{v3}
\Vertex[style = uVert, x = 0.550, y = 0.800, L = \small {$\nwarrow$}]{v4}
\Vertex[style = uVert, x = 0.600, y = 0.750, L = \scriptsize {$1$}]{v45}
\Edge[label = \small {}, labelstyle={auto=right, fill=none}](v1)(v0)
\Edge[label = \small {}, labelstyle={auto=right, fill=none}](v3)(v0)
\Edge[label = \small {}, labelstyle={auto=right, fill=none}](v1)(v2)
\Edge[label = \small {}, labelstyle={auto=right, fill=none}](v3)(v2)

\begin{scope}[xshift =0.37cm, xscale=1.2] 
\Vertex[style = tVert, x = 0.500, y = 0.850, L = \small {}]{v0}
\Vertex[style = uVert, x = 0.700, y = 0.850, L = \small {}]{v1}
\Vertex[style = uVert, x = 0.700, y = 0.650, L = \small {}]{v2}
\Vertex[style = BVert, x = 0.500, y = 0.650, L = \small {}]{v3}
\Vertex[style = uVert, x = 0.550, y = 0.800, L = \small {$\nwarrow$}]{v4}
\Vertex[style = uVert, x = 0.600, y = 0.750, L = \scriptsize {$2$}]{v45}
\Vertex[style = BVert, x = 0.600, y = 0.950, L = \small {}]{v5}
\Edge[label = \small {}, labelstyle={auto=right, fill=none}](v1)(v5)
\Edge[label = \small {}, labelstyle={auto=right, fill=none}](v5)(v0)
\Edge[label = \small {}, labelstyle={auto=right, fill=none}](v3)(v0)
\Edge[label = \small {}, labelstyle={auto=right, fill=none}](v1)(v2)
\draw[dotted, ultra thick](v2) .. controls (.6,.62) .. (v3);

\begin{scope}[xshift =0.40cm] 
\Vertex[style = tVert, x = 0.500, y = 0.850, L = \small {}]{v0}
\Vertex[style = uVert, x = 0.700, y = 0.850, L = \small {}]{v1}
\Vertex[style = uVert, x = 0.700, y = 0.650, L = \small {}]{v2}
\Vertex[style = SVert, x = 0.500, y = 0.650, L = \small {}]{v3}
\Vertex[style = uVert, x = 0.550, y = 0.800, L = \small {$\nwarrow$}]{v4}
\Vertex[style = uVert, x = 0.600, y = 0.750, L = \scriptsize {$1$}]{v45}
\Vertex[style = BVert, x = 0.600, y = 0.950, L = \small {}]{v5}
\Edge[label = \small {}, labelstyle={auto=right, fill=none}](v1)(v5)
\Edge[label = \small {}, labelstyle={auto=right, fill=none}](v5)(v0)
\Edge[label = \small {}, labelstyle={auto=right, fill=none}](v3)(v0)
\Edge[label = \small {}, labelstyle={auto=right, fill=none}](v1)(v2)
\draw[dotted, ultra thick](v2) .. controls (.6,.62) .. (v3);

\end{scope}
\end{scope}
\end{scope}



\begin{scope}[yshift=-.335in,xshift=.1cm]
\Vertex[style = BVert, x = 0.350, y = 0.760, L = \small {}]{v0}
\Vertex[style = BVert, x = 0.550, y = 0.760, L = \small {}]{v1}
\Vertex[style = tVert, x = 0.450, y = 0.650, L = \small {}]{v2}
\Vertex[style = uVert, x = 0.500, y = 0.650, L = \scriptsize {$2$}]{v4}
\Vertex[style = uVert, x = 0.550, y = 0.650, L = \small {$\longleftarrow$}]{v3}
\Vertex[style = uVert, x = 0.600, y = 0.650, L = \scriptsize {\$\$}]{v5}
\Edge[label = \small {}, labelstyle={auto=right, fill=none}](v0)(v1)
\Edge[label = \small {}, labelstyle={auto=right, fill=none}](v0)(v2)
\Edge[label = \small {}, labelstyle={auto=right, fill=none}](v2)(v1)

\begin{scope}[xshift=.23cm, yshift=0.06cm] 
\Vertex[style = BVert, x = 0.500, y = 0.650, L = \small {}]{v4}
\Vertex[style = uVert, x = 0.550, y = 0.650, L = \small {$\longrightarrow$}]{v3}
\Vertex[style = uVert, x = 0.600, y = 0.650, L = \scriptsize {$12$}]{v5}
\Vertex[style = uVert, x = 0.650, y = 0.650, L = \small {\$\$}]{v4}
\end{scope}
\end{scope}
\end{tikzpicture}

\caption{Examples of rules (R1), (R2), and (R3) are shown, from top to bottom
respectively.  Big vertices
are drawn as \BIGvert\!, and $5^-$-vertices are drawn as \SMALLvert\!,
and vertices that are not big (but possibly small) are drawn as \Tvert\!.
}
\end{figure}
\vfill

If a vertex or face ends with nonnegative charge, then it ends
\Emph{happy}.  We show that each vertex and face (and the bank)
ends happy.
Let $V_{big}$\aside{$V_{big}$} denote the set of big vertices.  The number of
$5^-$-vertices that take 2 from the bank is at most $2\card{E(G[V_{big}])}$. 
Since $G[V_{big}]$ is planar, $\card{E(G[V_{big}])}<3\card{V_{big}}$.  So
the bank ends happy, since it receives $12\card{V_{big}}$ and gives away less
than this.
\vfill

\newpage

Consider a face $f$.
\begin{enumerate}
\item $\ell(f)\ge 6$.
Rather than sending charge as in (R2), suppose that
$f$ sends 1 to each incident vertex, and then each big incident vertex sends 1
to its successor (in clockwise direction) around $f$.  Now each $5^-$-vertex
incident to $f$ receives at least as much as in (R2), so $f$ sends at least as
much as in (R2), and $f$ ends happy since $2\ell(f)-6-\ell(f)\ge 0$.

\item $\ell(f)=5$.
If $f$ sends charge to at most two incident vertices, then $f$ ends happy,
since $2\ell(f)-6-2(2)= 0$.  So suppose $f$ sends charge to at least three
incident vertices.  Now two of these receive only 1 from $f$.  So $f$ again
ends happy, since $2\ell(f)-6-2-2(1)=0$.

\item $\ell(f)=4$.
Because $f$ sends charge to at most two incident vertices, it ends happy,
since $2(4)-6-2(1)=0$.  

\item $\ell(f)=3$.
The face $f$ ends happy, since it starts and ends with 0.  
\end{enumerate}


Now we consider vertices.  Since $G$ is 2-connected, it has minimum degree at
least 2, and each vertex $v$ lies on $d(v)$ distinct faces.
A $5^-$-vertex $v$ with no big neighbor would satisfy (C1),  
so each $5^-$-vertex has at least one big neighbor.
Note that if $v$ has only one big neighbor, $w$, then $v$ takes $6-d(v)$ from
$w$ by (R1), so $v$ ends happy.  Thus, we assume $v$ has at least two big
neighbors.

\begin{enumerate}
\item[1.] $d(v)=2$.
Let $w_1$ and $w_2$ denote the two big neighbors of $v$. Since $G$ is
2-connected, the path $w_1vw_2$ lies on two (distinct) faces.  If one of these
is a 3-face, then $v$ takes 2 from the bank by (R3), at least 1 from its other incident
$4^+$-face by (R2), and $\frac12$ from each big neighbor by (R1); so $v$ ends happy, since
$2-6+2+1+2(\frac12)=0$.  If one incident face $f$ is a $5^+$-face, then $v$ takes 2
from $f$ by (R2), at least 1 from its other incident $4^+$-face by (R2), and
$\frac12$ from each big neighbor by (R1); so again $v$ ends happy. 
So assume that both incident faces are 4-faces.  Now $v$ is in a bunch with its
two big neighbors, so $v$ takes 1 from each by (R1).  Thus $v$ ends with $2-6+2(1)+2(1)=0$.

\item[2.] $d(v)=3$.
If $v$ has three big
neighbors, then for each incident face $f$, either $v$ takes at least 1 from $f$
by (R2) or $v$ takes 2 from the bank by (R3), and $v$ ends happy.  So assume
$v$ has exactly two big neighbors, $w_1$ and $w_2$.  If $w_1vw_2$ lies on a
3-face, then $v$ takes 2 from the bank by (R3) and $v$ takes $\frac12$ from
each $w_i$ by (R1), and $v$ ends happy, since $3-6+2+2(\frac12)=0$.  So assume
$w_1vw_2$ lies on a $4^+$-face.  If it lies on a $5^+$-face, or if $v$ lies on
two $4^+$-faces, then $v$ receives at least 2 from its incident faces by (R2)
and $\frac12$ from each $w_i$ by (R1), and again $v$ ends happy.  
So assume $v$ lies on a 4-face with $w_1$ and $w_2$ and also on two 3-faces. 
Now $v$ is in a bunch with $w_1$ and $w_2$, so $v$ takes 1 from each, by (R1). 
Thus, $v$ ends happy, since $3-6+1+2(1)=0$.

\item[3.] $d(v)=4$.
Suppose $v$ has at least three big neighbors.  So $v$ has two big neighbors
along at least two incident faces, $f_1$ and $f_2$.  If either $f_i$ is a
3-face, then $v$ takes 2 from the bank by (R3) and ends happy.  Otherwise $v$
takes at least 1 from each of $f_1$ and $f_2$ by (R2), so $v$ ends happy.  
Assume instead that $v$ has exactly two big neighbors, $w_1$ and $w_2$. 
Suppose that $vw_1$ and $vw_2$ are incident to the same face $f$.  If $f$ is a
3-face, then $v$ takes 2 from the bank by (R3) and ends happy.  If $f$ is a
$4^+$-face, then $v$ takes at least 1 from $f$ by (R2) and at least $\frac12$
from each big neighbor by (R1), and $v$ ends happy since $4-6+1+2(\frac12)=0$. 
So assume that $w_1$ and $w_2$ do not appear consecutively among the neighbors
of $v$.  If $v$ is incident to any $4^+$-face $f$, then $v$ takes at least 1
from $f$ by (R2) and $\frac12$ from each of its big neighbors by (R1), and again
$v$ ends happy.  Thus, we assume that $v$ lies on four 3-faces.  Now $v$ is in
a bunch with $w_1$ and $w_2$, so $v$ takes 1 from each by (R1), and $v$ ends happy.

\item[4.] $d(v)=5$.
If $v$ has exactly two big neighbors, $w_1$ and $w_2$, then $v$ receives
$\frac12$ from each by (R1), and $v$ ends with $5-6+2(\frac12)=0$.  So assume
that $v$ has at least three big neighbors.  By the Pigeonhole principle, $v$
lies on at least one face, $f$, with two big neighbors.  So $v$ receives at
least 1 from either $f$ or from the bank, by (R2) or (R3).  Thus, $v$ finishes
with at least $5-6+1=0$.

\item[5.]
$d(v)\ge 6$, but $v$ is not big.  
Now $v$ ends happy, since $d(v)-6\ge 0$.

\item[6.] $v$ is a big vertex but not a very big vertex.  
Suppose that $v$ has a
$5^-$-neighbor $w$ such that $v$ is the only big neighbor of $w$.  Now
$\sum_{x\in N(w)}d(x)\le 4(8680)+d(v)\le 4(8680)+(\Delta-4(8680))=\Delta\le k$.
 Thus, $w$ is an instance of (C1), a contradiction.  So $v$ has no such
$5^-$-neighbor.  As a result, $v$ sends at most 1 to each of its neighbors.  
Since $G$ has no instance of (C3), we have $\nf+2\Mns\ge 36$, where
$\nf$ and $\Mns$ are defined as in (C3).  Note that $v$ sends at most
$\frac12$ to each vertex counted by $\nf$ and sends no charge to each vertex
counted by $\Mns$.  Further, $v$ sends at most 1 to each other neighbor.  Also,
$v$ sends 12 to the bank.  So $v$ finishes with at least $d(v)-6-12 -\frac12\nf
-1(d(v)-\nf-\Mns) = -18+ \frac12(\nf+2\Mns)\ge -18 + \frac12(36)=0$.  

\item[7.]
$v$ is a very big vertex.  
Let $\W$ denote the set of $5^-$-vertices $w$ for which $v$ is the only big
neighbor of $w$.  Since $G$ has no instance of (C2), the numbers of
2-vertices, $3^-$-vertices, $4^-$-vertices, and $5^-$-vertices in $\W$ are
(respectively) at most 8888, 17654, 26400, and 35136.  So the total charge that
$v$ sends to these vertices is at most $8888+17654+26400+35136=88078$.  Since
$G$ has no copy of (C4), we have $\nf+2\Mns\ge 141416$.
We may assume that as many as possible of the vertices counted by $\nf$ are in
$\W$, since this is the situation in which $v$ sends the most charge.
If $w$ is counted by $\nf$ and is not in $\W$, then $v$ sends $w$ at most
$\frac12$.  If $w$ is counted by $\Mns$, then $v$ sends $w$ nothing.
So $v$ ends happy, since
$d(v)-6-12-88078-\frac12(\nf-35136)-1(d(v)-\nf-\Mns)=-18-88078+\frac12\nf+\frac12(35136)+\Mns=-70528+\frac12(\nf+2\Mns)\ge
-70528+\frac12(141416)\ge 0$.
%
\qedhere
\end{enumerate}
\end{proof}

\section{Reducibility}
\label{reducibility-sec}
In this section we use the Structural Lemma to prove the Main Theorem (its
second statement follows immediately from its first, so we prove the first).
Throughout, we assume the Main Theorem is false and let $G$ be a counterexample
with the fewest vertices.  Let $k=\max\{\Delta,4.2*10^{14}\}$.  We must show that
$\chi'_a(G)\le k$.  In Lemma~\ref{lem1}, we show that $G$ is 2-connected,
so we can apply the Structural Lemma to $G$.  Thus, it suffices to show that 
$G$ contains none of (C1), (C2), (C3), and (C4).
Lemma~\ref{lem1} forbids (C1), and Lemma~\ref{lem2} and Corollary~\ref{cor3}
forbid (C2).  The proofs of these results are straightforward.
For (C3) the argument is more technical, so we pull out a key piece of
it as Lemma~\ref{lem4}, before finishing the proof in Lemma~\ref{lem5}.
Finally, we handle (C4) in Lemma~\ref{lem6}, using a proof similar to that of
Lemma~\ref{lem5}.  Since the proofs for (C3) and (C4) are long, we outline
our approach just prior to Lemma~\ref{lem4}.

\begin{lem}
\label{lem1}
Let $G$ be a minimal counterexample to the Main Theorem.
Now $G$ is 2-connected and has no instance of configuration (C1).  That is,
every vertex $v$ has $\sum_{w\in N(v)}d(w)>k$.
In particular, every $5^-$-vertex has a big neighbor.
\end{lem}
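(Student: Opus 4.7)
My plan is to handle the two claims separately, each by deleting part of $G$ to obtain a smaller planar graph that, by minimality of $G$, is not a counterexample and hence admits an acyclic $k$-edge-coloring.

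For 2-connectivity I will apply a standard block-decomposition argument. If $G$ is disconnected, each component is a smaller planar graph, so by minimality each admits an acyclic $k$-edge-coloring and the union gives one for $G$. If $G$ has a cut vertex $v$, I consider the subgraphs $G_1,\ldots,G_m$ obtained by attaching $v$ to each connected component of $G-v$. Each $G_i$ is planar, has strictly fewer vertices than $G$, and satisfies $\Delta(G_i)\le\Delta\le k$, so by minimality admits an acyclic $k$-edge-coloring. Since $d(v)\le\Delta\le k$, I can permute colors within each $G_i$ so that the edges incident to $v$ across all $G_i$ receive pairwise distinct colors. The resulting coloring of $G$ is proper, and it is acyclic because any bichromatic cycle must lie in a single $G_i$ (since $v$ is a cut vertex), and each such cycle is ruled out by the corresponding block's coloring.

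For (RC1), suppose a vertex $v$ with $d:=d(v)$ satisfies $\sum_{w\in N(v)}d(w)\le k$. By minimality, $G-v$ admits an acyclic edge-coloring $\varphi$ using at most $k$ colors. The key idea is to choose colors at $v$ that are \emph{completely absent} from the neighbors of $v$ under $\varphi$: let $F$ be the set of colors used by $\varphi$ on edges incident to any $w\in N(v)$. Then $|F|\le\sum_{w\in N(v)}(d(w)-1)=\sum d(w)-d\le k-d$, so at least $d$ colors in $\irange{k}\setminus F$ are ``free.'' Assign $d$ distinct free colors to the $d$ edges at $v$. Properness is immediate at $v$ (distinct colors) and at each $w_i$ (the free color is not in $F$). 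For acyclicity, any bichromatic cycle through $v$ uses two edges at $v$ and hence two free colors $\alpha,\beta$; but then the portion of the cycle in $G-v$ is a path from $w_i$ to $w_j$ alternating $\alpha,\beta$, whose first edge would give $w_i$ a $\beta$-edge in $\varphi$, contradicting that $\beta\notin F$. Cycles avoiding $v$ were already excluded by $\varphi$. Thus $G$ admits an acyclic $k$-edge-coloring, contradicting that $G$ is a counterexample.

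The ``in particular'' clause then follows: a $5^-$-vertex with no big neighbor would satisfy $\sum_{w\in N(v)}d(w)\le 5\cdot 8679<5(8680)\le k$, triggering (RC1). I expect the main obstacle to be the cycle-avoidance step in the (RC1) argument; it is resolved by the single clean observation that insisting the new colors at $v$ be absent from $\varphi$ on all of $N(v)$ simultaneously kills the proper constraint at each $w_i$ and the acyclic Kempe-chain constraint, because no alternating path of the two chosen colors can even leave $w_i$. The block argument for 2-connectivity is routine in comparison.
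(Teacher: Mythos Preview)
Your proof is correct and follows essentially the same approach as the paper: the block decomposition for 2-connectivity is identical, and for (RC1) both you and the paper delete $v$, color $G-v$ by minimality, and then extend greedily to the edges at $v$ using colors absent from all of $N(v)$, with the same counting $\sum_{w\in N(v)}(d(w)-1)\le k-d(v)$ and the same observation that a bichromatic cycle through $v$ is impossible because neither new color is seen at any neighbor of $v$. Your write-up is in fact slightly more explicit about the counting and the acyclicity step than the paper's.
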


\begin{proof}
Let $G$ be a minimal counterexample.
Note that $G$ is connected, since otherwise one of its components is a smaller
counterexample.  
Suppose $G$ has a cut-vertex $v$, and let $G_1, G_2,\ldots$ denote the
components of $G-v$.  For each $i$, let $H_i=G[V(G_i)\cup\{v\}]$, the subgraph
formed from $G_i$ by adding all edges between $v$ and $V(G_i)$.
By minimality, each $H_i$ has an acyclic edge-coloring, say $\vph_i$.  By
permuting colors, we can assume that the sets of colors seen by $v$ in the
distinct $\vph_i$ are disjoint.  Now identifying the copies of $v$ in each $H_i$
gives a acyclic edge-coloring of $G$, a contradiction. Thus, $G$ must be 2-connected.

Suppose that $G$ has a vertex $v$ such that $\sum_{w\in N(v)}d(w)\le k$.
By minimality, $G-v$ has a acyclic edge-coloring $\vph$.  We greedily extend $\vph$
to each edge incident to $v$.  We color these edges with distinct
colors that do not already appear on some edge incident to a vertex $w$ in
$N(v)$.  This is possible precisely because $\sum_{w\in N(v)}d(w)\le k$.  Since
each color seen by $v$ is seen by only one neighbor of $v$, the resulting
extension of $\vph$ is proper and has no 2-colored cycle containing $v$; thus,
it is acyclic.  This contradiction shows that $\sum_{w\in N(v)}d(w)>k$ for every
vertex $v$.  Finally, suppose some $5^-$-vertex $v$ contradicts the final
statement of the lemma.  Now $\sum_{w\in N(v)}d(w)\le d(v)(8680)\le 5(8680)\le
k$, a contradiction.  Thus, the lemma is true.
\end{proof}

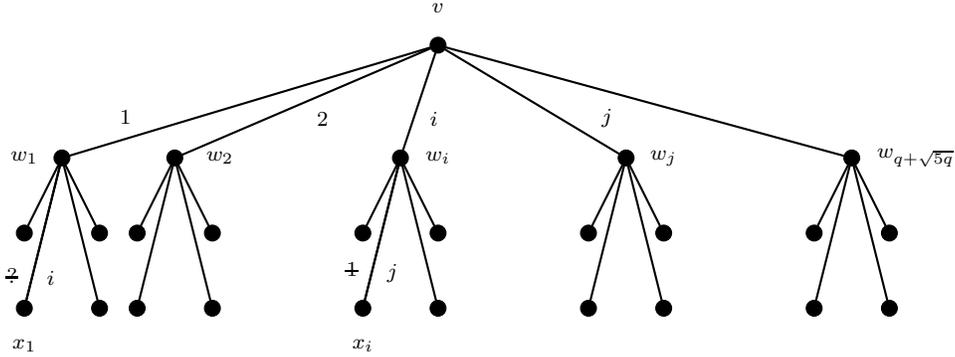
\begin{figure}[h]
\centering
\begin{tikzpicture}[scale = 10]
\tikzstyle{VertexStyle} = []
\tikzstyle{EdgeStyle} = []
\tikzstyle{labeledStyle}=[shape = circle, minimum size = 6pt, inner sep = 1.2pt]
\tikzstyle{unlabeledStyle}=[shape = circle, minimum size = 6pt, inner sep = 1.2pt, draw, fill]
\Vertex[style = unlabeledStyle, x = 0.700, y = 0.850, L = \tiny {}]{v0}
\Vertex[style = unlabeledStyle, x = 0.950, y = 0.700, L = \tiny {}]{v1}
\Vertex[style = unlabeledStyle, x = 0.900, y = 0.600, L = \tiny {}]{v2}
\Vertex[style = unlabeledStyle, x = 1.000, y = 0.600, L = \tiny {}]{v3}
\Vertex[style = unlabeledStyle, x = 0.900, y = 0.500, L = \tiny {}]{v4}
\Vertex[style = unlabeledStyle, x = 1.000, y = 0.500, L = \tiny {}]{v5}
\Vertex[style = unlabeledStyle, x = 0.200, y = 0.700, L = \tiny {}]{v6}
\Vertex[style = unlabeledStyle, x = 0.150, y = 0.600, L = \tiny {}]{v7}
\Vertex[style = unlabeledStyle, x = 0.250, y = 0.600, L = \tiny {}]{v8}
\Vertex[style = unlabeledStyle, x = 0.150, y = 0.500, L = \tiny {}]{v9}
\Vertex[style = unlabeledStyle, x = 0.250, y = 0.500, L = \tiny {}]{v10}
\Vertex[style = unlabeledStyle, x = 0.350, y = 0.700, L = \tiny {}]{v11}
\Vertex[style = unlabeledStyle, x = 0.300, y = 0.600, L = \tiny {}]{v12}
\Vertex[style = unlabeledStyle, x = 0.400, y = 0.600, L = \tiny {}]{v13}
\Vertex[style = unlabeledStyle, x = 0.300, y = 0.500, L = \tiny {}]{v14}
\Vertex[style = unlabeledStyle, x = 0.400, y = 0.500, L = \tiny {}]{v15}
\Vertex[style = unlabeledStyle, x = 0.650, y = 0.700, L = \tiny {}]{v16}
\Vertex[style = unlabeledStyle, x = 0.600, y = 0.600, L = \tiny {}]{v17}
\Vertex[style = unlabeledStyle, x = 0.700, y = 0.600, L = \tiny {}]{v18}
\Vertex[style = unlabeledStyle, x = 0.600, y = 0.500, L = \tiny {}]{v19}
\Vertex[style = unlabeledStyle, x = 0.700, y = 0.500, L = \tiny {}]{v20}
\Vertex[style = unlabeledStyle, x = 1.250, y = 0.700, L = \tiny {}]{v21}
\Vertex[style = unlabeledStyle, x = 1.200, y = 0.600, L = \tiny {}]{v22}
\Vertex[style = unlabeledStyle, x = 1.300, y = 0.600, L = \tiny {}]{v23}
\Vertex[style = unlabeledStyle, x = 1.200, y = 0.500, L = \tiny {}]{v24}
\Vertex[style = unlabeledStyle, x = 1.300, y = 0.500, L = \tiny {}]{v25}
\Vertex[style = labeledStyle, x = 0.150, y = 0.450, L = \scriptsize {$x_1$}]{v26}
\Vertex[style = labeledStyle, x = 0.600, y = 0.450, L = \scriptsize {$x_i$}]{v27}
\Vertex[style = labeledStyle, x = 1.000, y = 0.700, L = \scriptsize {$w_j$}]{v28}
\Vertex[style = labeledStyle, x = 0.700, y = 0.700, L = \scriptsize {$w_i$}]{v29}
\Vertex[style = labeledStyle, x = 0.410, y = 0.700, L = \scriptsize {$w_2$}]{v30}
\Vertex[style = labeledStyle, x = 0.150, y = 0.700, L = \scriptsize {$w_1$}]{v31}
\Vertex[style = labeledStyle, x = 1.335, y = 0.700, L = \scriptsize {$w_{q+\sqrt{5q}}$}]{v32}
\Vertex[style = labeledStyle, x = 0.700, y = 0.900, L = \scriptsize {$v$}]{v33}
\Edge[label = \tiny {}, labelstyle={auto=right, fill=none}](v2)(v1)
\Edge[label = \tiny {}, labelstyle={auto=right, fill=none}](v3)(v1)
\Edge[label = \tiny {}, labelstyle={auto=right, fill=none}](v4)(v1)
\Edge[label = \tiny {}, labelstyle={auto=right, fill=none}](v5)(v1)
\Edge[label = \tiny {}, labelstyle={auto=right, fill=none}](v7)(v6)
\Edge[label = \tiny {}, labelstyle={auto=right, fill=none}](v8)(v6)
\Edge[label = \scriptsize {\sout{?}}, labelstyle={auto=left, fill=none, pos=.05}](v9)(v6)
\Edge[label = \scriptsize {$i$}, labelstyle={auto=right, fill=none, pos=.3}](v9)(v6)
\Edge[label = \tiny {}, labelstyle={auto=right, fill=none}](v10)(v6)
\Edge[label = \tiny {}, labelstyle={auto=right, fill=none}](v12)(v11)
\Edge[label = \tiny {}, labelstyle={auto=right, fill=none}](v13)(v11)
\Edge[label = \tiny {}, labelstyle={auto=right, fill=none}](v14)(v11)
\Edge[label = \tiny {}, labelstyle={auto=right, fill=none}](v15)(v11)
\Edge[label = \tiny {}, labelstyle={auto=right, fill=none}](v17)(v16)
\Edge[label = \tiny {}, labelstyle={auto=right, fill=none}](v18)(v16)
\Edge[label = \scriptsize {\sout{1}}, labelstyle={auto=left, fill=none, pos=.1}](v19)(v16)
\Edge[label = \scriptsize {$j$}, labelstyle={auto=right, fill=none, pos=.35}](v19)(v16)
\Edge[label = \tiny {}, labelstyle={auto=right, fill=none}](v20)(v16)
\Edge[label = \scriptsize {$j$}, labelstyle={auto=right, fill=none, pos=.16}](v1)(v0)
\Edge[label = \scriptsize {$1$}, labelstyle={auto=left, fill=none, pos=.2}](v6)(v0)
\Edge[label = \scriptsize {$2$}, labelstyle={auto=right, fill=none}](v11)(v0)
\Edge[label = \scriptsize {$i$}, labelstyle={auto=right, fill=none}](v16)(v0)
\Edge[label = \tiny {}, labelstyle={auto=right, fill=none}](v22)(v21)
\Edge[label = \tiny {}, labelstyle={auto=right, fill=none}](v23)(v21)
\Edge[label = \tiny {}, labelstyle={auto=right, fill=none}](v24)(v21)
\Edge[label = \tiny {}, labelstyle={auto=right, fill=none}](v25)(v21)
\Edge[label = \tiny {}, labelstyle={auto=right, fill=none}](v21)(v0)
\end{tikzpicture}
\caption{A big vertex $v$ and its set $\W$ of $5^-$-neighbors with $v$ as their
unique big neighbor, as in Lemma~\ref{lem2}.\label{fig:lem2}}
\end{figure}

\begin{lem}
\label{lem2}
Fix an integer $q$ such that $q\ge 100$.
Now $G$ cannot have a vertex $v$ such that $\card{\W}\ge
q+\sqrt{5q}$, where $\W$ is the set of $5^-$-neighbors $w$ of $v$ such that
$\sum_{x\in N(w)\setminus v}d(x)< q$. 
\end{lem}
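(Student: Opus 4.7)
Suppose for contradiction that such a vertex $v$ exists; let $s = \card{\W}$ and write $\W = \{w_1, \ldots, w_s\}$. The hypothesis rearranges to $s \ge \Delta - d(v) + q + \sqrt{5q}$. The plan is to delete the $s$ edges $vw_j$ from $G$ to form $G' := G - \{vw_j : 1 \le j \le s\}$, invoke minimality to obtain an acyclic edge-coloring $\vph$ of $G'$ using at most $k$ colors, and then extend $\vph$ to all of $G$, contradicting minimality.

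Let $P$ denote the set of colors not appearing at $v$ in $\vph$, so $\card{P} = k - (d(v)-s) \ge \Delta - d(v) + s \ge q + \sqrt{5q}$. For each $w_j$, I would bound the set $B_j \subseteq P$ of colors forbidden for $vw_j$ in $\vph$: at most $d(w_j)-1$ colors are already seen at $w_j$, and for each $\beta$ in $\vph(v)\cap\vph(w_j)$, the colors $\alpha$ that would produce a bicolored $(\alpha,\beta)$-cycle through $vw_j$ arise as the colors on edges at the $\beta$-neighbor $x_\beta$ of $w_j$, contributing at most $d(x_\beta)-1$ options. Summing and invoking the hypothesis $\sum_{x \in N(w_j)\setminus v}d(x)\le q$, we get $\card{B_j} \le (d(w_j)-1) + (q-(d(w_j)-1)) = q$, so each edge $vw_j$ has at least $\sqrt{5q}$ good colors available in $P$.

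It remains to assign $s$ distinct good colors, one per edge $vw_j$. A plain greedy assignment clearly succeeds for the first $\sqrt{5q}$ edges, but subsequent edges may find all their good colors already used at $v$. For these, I expect the argument to proceed by a Vizing-style swap of the type depicted in Figure~\ref{fig:lem2}: when every good color for $vw_j$ is already assigned to some earlier edge $vw_i$, pick such a color $\alpha$, recolor some edge $w_ix$ at $w_i$ (exploiting the abundance of free colors near $w_i$ supplied by the bound $\sum_{x\in N(w_i)\setminus v}d(x)\le q$) so as to free $\alpha$ for $vw_j$ without creating a fresh conflict at $x$.

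The main obstacle is orchestrating these swaps globally so that the extended coloring remains both proper and acyclic: a single swap can re-populate $B_{j'}$ for some other $j'$, cascade into new bicolored cycles along a Kempe chain, or unfree a color we have already spent. I would handle this by recasting the extension as finding a perfect matching in the bipartite graph on $\W\cup P$ with edges $\{(w_j,\alpha) : \alpha\not\in B_j\}$, and then verifying Hall's condition. For $\card{S}\le\sqrt{5q}$ the condition is immediate from $\card{P}-\card{B_j}\ge\sqrt{5q}$, while for larger $S$ one combines the bound $\card{\bigcap_{j\in S}B_j}\le\sum_j\card{B_j}/\card{S}\le sq/\card{S}$ with $\card{P}\ge\Delta-d(v)+s$; the resulting inequality is a quadratic in $\card{S}$ whose discriminant is non-negative precisely because of the $\sqrt{5q}$ slack in the hypothesis, so the specific form $q+\sqrt{5q}$ is exactly what is needed for Hall to close at every subset size.
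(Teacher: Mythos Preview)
Your Hall's-theorem route does not close, and the gap is not cosmetic.

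First, a minor point: the paper's minimality is on \emph{vertices}, so deleting the edges $vw_j$ does not let you invoke minimality on $G'$. This is fixable (delete a vertex instead), but it changes the setup.

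The real problem is the matching argument. Your displayed bound $\card{\bigcap_{j\in S}B_j}\le\sum_j\card{B_j}/\card{S}\le sq/\card{S}$ is not a valid inequality for an intersection; the only thing that follows from $\card{B_j}\le q$ is $\card{\bigcap_{j\in S}B_j}\le q$. With that, Hall's condition requires $\card{P}-q\ge\card{S}$ for every $S$, in particular $\card{P}-q\ge s$, i.e.\ $k-d(v)\ge q$. But nothing rules out $d(v)=\Delta=k$, in which case $\card{P}=s$. If, say, all $B_j$ happen to coincide as a single set of size $q$ (your hypotheses on the $B_j$ do not prevent this), then $\card{N(\W)}=s-q=\sqrt{5q}\ll s$ and Hall fails outright. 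No quadratic in $\card{S}$ rescues this, because you have extracted no structural information linking the $B_j$'s to one another. You also never handle the bicolored cycles that use \emph{two} newly colored edges at $v$; your $B_j$ only blocks cycles through an old edge at $v$.

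The paper does something genuinely different. It deletes a single vertex $w_1$, so that after extending the coloring all edges $vw_j$ are \emph{already} colored (say with colors $1,\dots,\card{\W}$), and the task is to color the few edges $w_1x$ at $w_1$. The safe palette $\S$ for $w_1x_1$ is the union of colors missing at $v$ \emph{and} the colors $1,\dots,\card{\W}$. If coloring $w_1x_1$ with $i\in\S$ creates a cycle, it must run through $vw_i$, so $w_i$ sees color $1$; one then recolors that edge $w_ix_i$ with some $j\in\S_i$ for which $w_j$ does not see $i$. The quadratic $(\card{\W}-q)^2\le4\card{\W}$ arises from double-counting: if no such $(i,j)$ exists, then every color $i\in\S_1$ is seen at least $\card{\S_i}\ge\card{\W}-q$ times among the $\le4\card{\W}$ non-$v$ edges at $\W$. \emph{That} is where $q+\sqrt{5q}$ comes from---a two-level swap plus pigeonhole---and it has no analogue in a static matching formulation.
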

\begin{proof}
Suppose the lemma is false, and that $q$, $G$, and $v$ witness this.\aside{$q$,
$G$, $v$} Let \EmphE{$\W$}{4mm} be the set of these neighbors of $v$; see
Figure~\ref{fig:lem2} for an example.  Pick an arbitrary $w_1\in \W$. 
By minimality, $G-w_1$ has an acyclic edge-coloring, $\vph$.  
We can greedily extend this coloring to $G-w_1+vw_1$ (and we still call it $\vph$).
Let $w_1, w_2, \ldots$ denote the vertices of $\W$. Let \Emph{$\S$} be the set of colors
either not used on an edge incident to $v$ or else used on an edge from $v$ to a neighbor
in $\W$.  For each neighbor $w_i$, by symmetry we assume that $\vph(vw_i)=i$. 
For each $w_i$, let \Emph{$\C_i$} be the set of colors used on edges incident to
vertices in $N(w_i)\setminus v$.  For each $i$, let $\S_i=\S\setminus
(\C_i\cup\{i\})$.\aside{$\S_i$}  
Any coloring obtained from $\vph$ by coloring $x_1w_1$ with a color $i\in \S_1$
or by recoloring an edge incident to $w_i$ (and not $v$) with a color $j\in
\S_i$ is a proper edge-coloring, and we explain further how to find one that is
an acyclic edge-coloring.

Let \Emph{$x_1$} be an arbitrary neighbor in $G$ of $w_1$, other than $v$.  
We now show how to extend the acyclic edge-coloring to $w_1x_1$.  This will complete the
proof, since the same argument can be repeated to extend the coloring to each
other uncolored edge of $G$ incident to $w_1$.

If we color $w_1x_1$ with any $i\in \S_1$, then any 2-colored cycle
we create must use edges $x_1w_1$, $w_1v$, and $vw_i$.  Such a cycle is only possible if
$w_i$ sees color 1.  So we assume $w_i$ sees color 1, for every $i\in \S_1$
(otherwise we can extend the coloring to $w_1x_1$).  Now for each $i\in \S_1$,
define $x_i$ such that $\vph(w_ix_i)=1$.  (Note that $x_i=x_1$ for at most one
value of $x_i$, so we can essentially ignore this case.)

Our goal is to find indices $i$ and $j$ such that $i\in \S_1$ and $j\in \S_i$
and $w_j$ does not see color $i$.  If we find such $i$ and $j$, then we color
$w_1x_1$ with $i$ and recolor $w_ix_i$ with $j$ (as in Figure~\ref{fig:lem2}). 
This creates no 2-colored cycles, as
we now show.  Any 2-colored cycle using $w_1x_1$ must also use $w_1v$ and
$vw_i$ (since $i\in \S_1$).  But no such 2-colored cycle exists, since $w_i$ no
longer sees 1.  Similarly, any 2-colored cycle using edge $w_ix_i$ also uses 
$w_iv$ and $vw_j$.  Again, no such 2-colored cycle exists, since $w_j$ does not see
$i$.

Now we show that we can find such $i$ and $j$.  Suppose not.  So for each $i\in
\S_1$ and $j\in\S_i$ vertex $w_j$ sees color $i$.  Thus, among the at most
$4\card{\W}$ edges incident to some $w_i$, but not to $v$, each color $i\in
\S_1$ appears
at least $\card{\S_i}$ times.  Since $\card{\S_i}\ge\card{\W}-(\card{\C_i}+1)\ge
\card{\W}-q$, we get $(\card{\W}-q)^2\le 4\card{\W}$.  Solving this quadratic gives
$\card{\W}\le q+2+\sqrt{4q+4}$.  But this quantity is less than
$q+\sqrt{5q}$ when $q>80$, a contradiction.
\end{proof}

\vfill

\begin{cor}
\label{cor3}
Configuration (C2) cannot appear in a minimal counterexample $G$.  That is, $G$
has no big vertex $v$ such that among those
$5^-$-vertices with $v$ as their unique big neighbor the number of
(i) 2-vertices is at least 8889 or
(ii) $3^-$-vertices is at least 17655 or
(iii) $4^-$-vertices is at least 26401 or
(iv) $5^-$-vertices is at least 35137.
\end{cor}
\begin{proof}
This is a direct application of the previous lemma, for each $q\in\{8680, 17360,
26040, 34720\}$.  Each $5^-$-vertex $w$ with
$v$ as its only big neighbor has $\sum_{x\in d(w)\setminus v}d(x) <
(d(w)-1)8680$.  Thus, for $2$-vertices, $3$-vertices, $4$-vertices, and
$5$-vertices, the sums are (respectively) at most 8680, 17360, 26040, and
34720.  Now we are done, since $8680+\sqrt{5(8680)}\le 8889$;
$17360+\sqrt{5(17360)}\le 17655$; $26040+\sqrt{5(26040)}\le 26401$; and
$34720+\sqrt{5(34720)}\le 35137$.  
\end{proof}

\vfill

Next we turn to the task of proving that neither (C3) nor (C4) can appear in a
minimal counterexample.  Since the proofs are long, we sketch the ideas below.
The proof for (C4) is similar to that for (C3), so we just sketch the
latter.  First, we need definitions.  Recall that, for a bunch with parents
$v$ and $w$ and bunch vertices $x_0,\ldots,x_{t+1}$, the \EmphE{length}{-5mm}
is $t$ and each path $vx_iw$ is a \EmphE{thread}{0mm}.  A \emph{horizontal edge}\aaside{horizon- 
tal edge}{-1mm} is any edge $x_ix_{i+1}$, with $1\le i\le t-1$.  For a bunch $B$ in a
graph $G$, we form $G_B$ from $G$\aaside{$B$, $G_B$}{2mm} by deleting all horizontal
edges of $B$ (recall that this does not delete $x_0x_1$ and $x_tx_{t+1}$).  Now
$B$ is \EmphE{long}{1mm} if, given any integer $k\ge 13$ and any acyclic
$k$-edge-coloring of $G_B$, there exists an acyclic $k$-edge-coloring of $G$. 
If $B$ is not long, then it is \Emph{short}.
%

Using a counting argument, we show that the big vertex $v$ in (C3) has all but
a constant number, $a_0$, of its incident edges in long bunches; further, one of
these bunches, say $B_0$, has length greater than $a_0$.  We delete all horizontal
edges in long bunches, as well as one edge, $e$, incident to $v$ that lies on a
thread in $B_0$.  By minimality, the resulting graph has an acyclic
$k$-edge-coloring.  By repeatedly recoloring threads incident to $v$ in long
bunches, we can eventually extend this coloring to $e$.  Finally, we extend the
coloring to all the horizontal edges in long bunches.  
We now prove that every bunch of length at least 11 is long.  (It is easy to
check that bunches of sufficiently large length (say 22) are long, but relying
on this weaker bound would increase the value $4.2*10^{14}$ in our Main
Theorem.)  
\vfill
\newpage

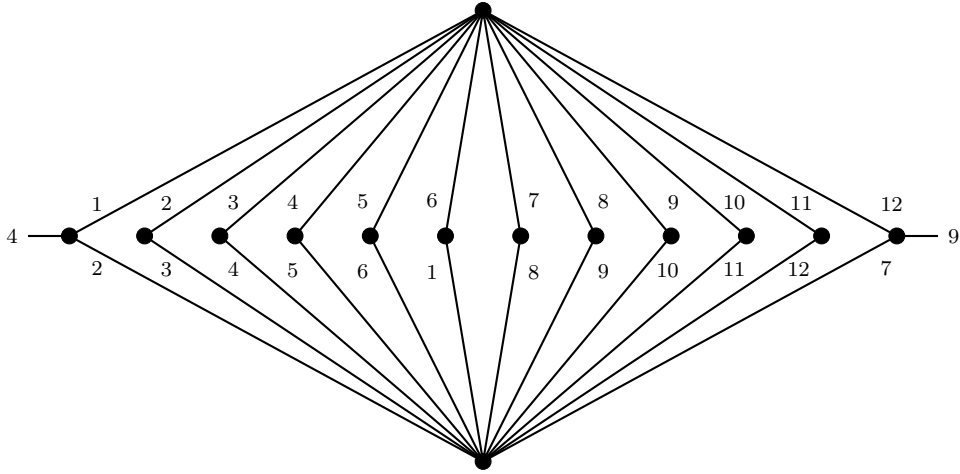
\begin{figure}[!h]
\centering
\begin{tikzpicture}[scale = 10]
\tikzstyle{VertexStyle} = []
\tikzstyle{EdgeStyle} = []
\tikzstyle{smallish}=[shape=circle, minimum size=1pt, inner sep=0pt]
\tikzstyle{labeledStyle}=[shape = circle, minimum size = 6pt, inner sep = 1.2pt, draw]
\tikzstyle{unlabeledStyle}=[shape = circle, minimum size = 6pt, inner sep = 1.2pt, draw, fill]
\Vertex[style = smallish, x=-.035, y=.650, L=\tiny {}]{v14}
\Vertex[style = smallish, x=1.235, y=.650, L=\tiny {}]{v15}
\Vertex[style = unlabeledStyle, x = 0.250, y = 0.650, L = \tiny {}]{v0}
\Vertex[style = unlabeledStyle, x = 0.350, y = 0.650, L = \tiny {}]{v1}
\Vertex[style = unlabeledStyle, x = 0.450, y = 0.650, L = \tiny {}]{v2}
\Vertex[style = unlabeledStyle, x = 0.550, y = 0.650, L = \tiny {}]{v3}
\Vertex[style = unlabeledStyle, x = 0.650, y = 0.650, L = \tiny {}]{v4}
\Vertex[style = unlabeledStyle, x = 0.150, y = 0.650, L = \tiny {}]{v5}
\Vertex[style = unlabeledStyle, x = 0.750, y = 0.650, L = \tiny {}]{v6}
\Vertex[style = unlabeledStyle, x = 0.850, y = 0.650, L = \tiny {}]{v7}
\Vertex[style = unlabeledStyle, x = 0.950, y = 0.650, L = \tiny {}]{v8}
\Vertex[style = unlabeledStyle, x = 1.050, y = 0.650, L = \tiny {}]{v9}
\Vertex[style = unlabeledStyle, x = 0.050, y = 0.650, L = \tiny {}]{v10}
\Vertex[style = unlabeledStyle, x = 1.150, y = 0.650, L = \tiny {}]{v11}
\Vertex[style = unlabeledStyle, x = 0.600, y = 0.950, L = \tiny {}]{v12}
\Vertex[style = unlabeledStyle, x = 0.600, y = 0.350, L = \tiny {}]{v13}
%
%
\Edge[label = \scriptsize {$1$}, labelstyle={auto=left, pos=.05, fill=none,
xshift=.09in}](v10)(v12)
\Edge[label = \scriptsize {$2$}, labelstyle={auto=left, pos=.05, fill=none,
xshift=.08in}](v5)(v12)
\Edge[label = \scriptsize {$3$}, labelstyle={auto=left, pos=.05, fill=none,
xshift=.06in}](v0)(v12)
\Edge[label = \scriptsize {$4$}, labelstyle={auto=left, pos=.05, fill=none}](v1)(v12)
\Edge[label = \scriptsize {$5$}, labelstyle={auto=left, pos=.05, fill=none}](v2)(v12)
\Edge[label = \scriptsize {$6$}, labelstyle={auto=left, pos=.05, fill=none}](v3)(v12)
\Edge[label = \scriptsize {$7$}, labelstyle={auto=right, pos=.05, fill=none}](v4)(v12)
\Edge[label = \scriptsize {$8$}, labelstyle={auto=right, pos=.05, fill=none}](v6)(v12)
\Edge[label = \scriptsize {$9$}, labelstyle={auto=right, pos=.05, fill=none}](v7)(v12)
\Edge[label = \scriptsize {$10$}, labelstyle={auto=right, pos=.05, fill=none,
xshift=-.08in}](v8)(v12)
\Edge[label = \scriptsize {$11$}, labelstyle={auto=right, pos=.05, fill=none,
xshift=-.1in}](v9)(v12)
\Edge[label = \scriptsize {$12$}, labelstyle={auto=right, pos=.05, fill=none}](v11)(v12)
%
%
\Edge[labelstyle={pos=.9}, label = \scriptsize{4}](v10)(v14)
\Edge[labelstyle={pos=.9}, label = \scriptsize{9}](v11)(v15)
%
%
\Edge[label = \scriptsize {$2$}, labelstyle={auto=right, pos=.05, fill=none,
xshift=.09in}](v10)(v13)
\Edge[label = \scriptsize {$3$}, labelstyle={auto=right, pos=.05, fill=none,
xshift=.08in}](v5)(v13)
\Edge[label = \scriptsize {$4$}, labelstyle={auto=right, pos=.05, fill=none,
xshift=.06in}](v0)(v13)
\Edge[label = \scriptsize {$5$}, labelstyle={auto=right, pos=.05, fill=none}](v1)(v13)
\Edge[label = \scriptsize {$6$}, labelstyle={auto=right, pos=.05, fill=none}](v2)(v13)
\Edge[label = \scriptsize {$1$}, labelstyle={auto=right, pos=.05, fill=none}](v3)(v13)
\Edge[label = \scriptsize {$8$}, labelstyle={auto=left, pos=.05, fill=none}](v4)(v13)
\Edge[label = \scriptsize {$9$}, labelstyle={auto=left, pos=.05, fill=none}](v6)(v13)
\Edge[label = \scriptsize {$10$}, labelstyle={auto=left, pos=.05, fill=none,
xshift=-.06in}](v7)(v13)
\Edge[label = \scriptsize {$11$}, labelstyle={auto=left, pos=.05, fill=none,
xshift=-.08in}](v8)(v13)
\Edge[label = \scriptsize {$12$}, labelstyle={auto=left, pos=.05, fill=none,
xshift=-.115in}](v9)(v13)
\Edge[label = \scriptsize {$7$}, labelstyle={auto=left, pos=.05, fill=none}](v11)(v13)
\end{tikzpicture}
\caption{An acyclic edge-coloring of $G_B$, restricted to the edges incident to bunch
vertices of $B$, where $B$ is a bunch of length 12.\label{fig1}
}
\end{figure}
\bigskip
\bigskip

\begin{lem}
\label{lem4}
In every planar graph, every bunch of length at least 11 is long.
\end{lem}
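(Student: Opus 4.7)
The plan is to extend the given acyclic $k$-edge-coloring $\vph$ of $G_B$ to an acyclic $k$-edge-coloring of $G$ by coloring the horizontal edges of $B$ one at a time. Set $\alpha_i = \vph(vx_i)$ and $\beta_i = \vph(wx_i)$ for $0 \le i \le t+1$; the sequences $(\alpha_i)$ and $(\beta_i)$ are each pairwise distinct. Order the horizontal edges left-to-right as $e_1,\ldots,e_s$ and attempt to color them greedily in this order.

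For each $e_j = x_i x_{i+1}$, I would bound the set of forbidden colors. \emph{Properness} contributes at most $(d(x_i)-1)+(d(x_{i+1})-1) \le 3+3 = 6$ colors, since every bunch vertex has degree at most $4$ in $G$. \emph{Bichromatic-cycle creation} forbids those $c$ for which, for some $c' \ne c$, adding $e_j$ of color $c$ completes a $(c,c')$-cycle in the current coloring. The crucial structural fact is that any interior bunch vertex has only $v$ and $w$ as non-horizontal neighbors, so a bichromatic cycle through $e_j$ must traverse $v$ or $w$; the only way such a cycle can close without repeating $v$ or $w$ is via a short chain of already-colored horizontal edges ending in $x_{i-1}x_i$. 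For each candidate secondary color $c'$ among the at most three colors appearing at $x_i$ (namely $\alpha_i$, $\beta_i$, and possibly $\vph(x_{i-1}x_i)$), the $(c,c')$-alternating path from $x_i$ is uniquely determined given $c$, and the bad event is that this path terminates at $x_{i+1}$ via a $c'$-edge. A short case analysis over cycle lengths $4,6,8,\ldots$ then shows that each choice of $c'$ contributes at most a small constant number of forbidden $c$-values, each expressible as a specific $\alpha_j$, $\beta_j$, or horizontal-edge color subject to a tight chain of alternation equalities.

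The main obstacle is keeping the total forbidden count strictly below $k = 13$ in the presence of long chains of already-colored horizontal edges to the left of $e_j$. When the chain $x_jx_{j+1},\ldots,x_{i-1}x_i$ happens to satisfy the alternation pattern required by a long bichromatic cycle through $v$ and $w$, naive forbidden-color bounds can approach the threshold. To handle this I would exploit the bunch length $t \ge 11$: if the greedy pass stalls at some $e_j$, there is enough room deep inside the bunch to perform a local swap\,---\,for instance, exchanging the colors of $vx_m$ and $vx_{m'}$ for some $m,m'$ far from $i$, or flipping a two-colored alternating segment in the bunch\,---\,that breaks the forced color equalities and frees a valid color for $e_j$. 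The threshold $t \ge 11$ is calibrated exactly so that such a swap can always be arranged without introducing a new bichromatic cycle, yielding the desired acyclic $k$-edge-coloring of $G$.
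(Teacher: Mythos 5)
Your plan keeps the coloring of $G_B$ fixed and tries to color the horizontal edges greedily in place, catching bichromatic cycles by ad hoc local swaps. This is not what the paper does, and more importantly, both of the load-bearing claims in your write-up are unjustified and I don't see how to repair them.

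First, the claim that for each candidate secondary color $c'$ at $x_i$, ``a short case analysis over cycle lengths $4,6,8,\ldots$'' shows that only a small constant number of $c$'s are forbidden. Fix $c' = \alpha_i$, say, so the $(c,c')$-alternating path from $x_i$ starts out to $v$. At $v$ it continues along whichever edge of $v$ carries the color $c$ you are testing, and \emph{that edge need not stay in the bunch}: $v$ is a big vertex with huge degree, and the alternating path can leave the bunch at $v$, wander through arbitrary parts of $G$, and return to the bunch at $w$ (or $v$ again) before reaching $x_{i+1}$. There is no structural reason the number of $c$'s that produce such an excursion is bounded by a constant, and you have not given one; the ``cycle lengths $4,6,8,\ldots$'' enumeration only covers cycles confined to the bunch. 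Your claim that such a cycle must close ``via a short chain of already-colored horizontal edges'' is simply false once the path leaves the bunch at a parent.

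Second, the fallback ``if the greedy pass stalls, swap colors far from $i$'' is not a proof. You would need to exhibit a specific swap, verify that it is still proper, verify that it creates no \emph{new} bichromatic cycle (swapping two thread colors at $v$ can close cycles elsewhere in $G$), and show that it actually unblocks $e_j$. None of this is argued, and the calibration ``$t \ge 11$ is exactly enough room'' is asserted, not derived.

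The paper sidesteps both difficulties with a cleaner move that your proposal is missing: instead of fixing the thread colors and fighting bichromatic cycles as they arise, it \emph{permutes the threads} $vx_jw$ into new positions so that after reordering no color appears at two consecutive bunch vertices $x_i$, $x_{i+1}$. With that invariant established (via a conflict graph on threads, which is a union of paths and cycles, plus a Hall's-Theorem matching argument to fill the even positions), the greedy coloring of horizontal edges has a clean forbidden-set bound of $12$ (eight thread-edge colors at $x_{i-1},x_i,x_{i+1},x_{i+2}$ and four nearby horizontal-edge colors), and any putative bichromatic cycle is forced to use two consecutive horizontal edges, which the spacing condition rules out directly. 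The threshold $t \ge 11$ then comes from needing $\lfloor (t-1)/2\rfloor \ge 5$ in the Hall's-Theorem case analysis, not from any swap-room heuristic. Your proposal as written has a genuine gap and does not establish the lemma.
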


\begin{proof}
Consider a planar graph $G$ with a bunch, \Emph{$B$}, of length at least 11. 
Fix an integer $k\ge 13$.  We may assume $G_B$ has an acyclic
$k$-edge-coloring; see Figure~\ref{fig1} for an example.  Let $v$ and
$w$\aside{$v$, $w$} be the parents of the bunch and let $x_1,\ldots,x_t$ denote
its vertices.  We will show that we can reorder the threads of $B$ so that (for
each $i\in\irange{t-1}$) no color appears incident to both $x_i$ and $x_{i+1}$.
 (Technically, we reorder the pairs of colors on the edges $vx_i$ and
$x_iw$, while preserving, in each pair, which color is incident to $v$ and
which is incident to $w$; but this minor distinction will not trouble us.)
We also require that the colors seen by $x_2$ do not appear on edge $x_0x_1$ and,
similarly, the colors seen by $x_{t-1}$ do not appear on edge $x_tx_{t+1}$.  If we
can reorder the threads to achieve this property, then it is easy to extend the
$k$-edge-coloring to $G$, as follows.

We greedily color the horizontal edges in any order, requiring that the
color used on $x_ix_{i+1}$ not appear on any (colored) edge incident to
$x_{i-1}$, $x_i$, $x_{i+1}$, or $x_{i+2}$.  Each of these vertices has two
incident edges on a thread, for a total of 8 edges.  We must also avoid the
colors on at most 4 horizontal edges.  Thus, at most 12 colors are forbidden. 
Since $k\ge 13$, we greedily complete the coloring.  Given an acyclic
$k$-edge-coloring of $G_B$, suppose that we reorder the threads of $G_B$ and
greedily extend the coloring to the horizontal edges of $B$.  Call the
resulting $k$-edge-coloring \Emph{$\vph$}.  Clearly, $\vph$ is a proper edge-coloring.
We must also show that it has no 2-colored cycles.  Suppose, to the contrary,
that $\vph$ has a 2-colored cycle, $C$.  By the condition on our ordering of the
threads of $B$, the cycle $C$ must use at least two successive horizontal
edges of $B$.  But now one of these horizontal edges $x_ix_{i+1}$ of $C$ must
share a color with an edge incident to $x_{i-1}$ or $x_{i+2}$, a contradiction.
 Thus, $\vph$ is an acyclic $k$-edge-coloring of $G$, as desired.  Hence, it
suffices to show that we can reorder the threads of $B$ so that no
color appears incident to both $x_i$ and $x_{i+1}$.

For each $i\in\irange{t}$, we think of putting some thread $vx_jw$ into
position $i$ (where also $j\in\irange{t}$).  We always put thread 1 into
position 1 and thread $t$ into position $t$.  We will also initially put
threads into the positions with $i$ odd.  Let \Emph{$\O$} be the set of threads that we
put in the odd positions (and thread $t$, whether or not $t$ is odd); $\O$ is
for odd.  Note that $\card{\O}=\ceil{(t+1)/2}$.  Later, we put threads into
the even positions.  To do so, after putting threads into the odd positions, we
build a bipartite graph, \Emph{$H(B,\O)$}, where the vertices of one part are
the even numbered positions (excluding $t$) and the vertices of the other part
are those threads not yet placed.  We add an edge between a thread $vx_iw$ and
a position $j$ if no color used on the thread is also used on a thread already
in position $j-1$ or $j+1$, or used on $x_0x_1$ when $j=2$, or on
$x_tx_{t+1}$ when $j=t-1$; see Figure~\ref{fig4} for an example.  (The notation
$H(B,\O)$ is slightly misleading,
since the edges of this graph depend not only on our choice of $\O$, but also on
which threads we put where.) Thus, to place the remaining threads, it suffices
to find a perfect matching in $H(B,\O)$.  When $t\ge 22$, we can put threads
into the odd positions essentially arbitrarily, and we are guaranteed a perfect
matching in $H(B,\O)$ by a straightforward application of Hall's
Theorem. This approach allows us to complete the proof, but requires that we
replace $4.2*10^{14}$ with a larger constant.  For smaller $t$, we use a similar
approach, but need more detailed case analysis.

We build a \Emph{conflict graph}, \EmphE{$\Bconf$}{8mm}, which has as its vertices the threads
of $B$, that is, $vx_iw$, for all $i\in\irange{t}$.  Two vertices are adjacent
in $\Bconf$ if their corresponding threads share a common color; see
Figure~\ref{fig2} for an example.  Note that $\Bconf$ is a disjoint union of
paths and cycles, since every edge in a thread of $B$ is incident to either $v$
or $w$.  We refer interchangeably to a thread and its corresponding vertex in
$\Bconf$.  To form $\O$ we start with an empty set and repeatedly add vertices,
subject to the following condition.  Each component of $\Bconf$ with a vertex
in $\O$ must have all of its vertices in $\O$, except for at most one
component; if such a component exists, then its vertices that are in $\O$ must
induce a path.  Thus, at most two threads in $\O$ have neighbors in $\Bconf$
that are not in $\O$ (and if exactly two, then each has at most one such neighbor).  

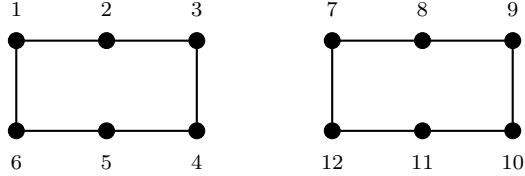
\begin{figure}[t]
\centering
\begin{tikzpicture}[scale = 12]
\tikzstyle{VertexStyle} = []
\tikzstyle{EdgeStyle} = []
\tikzstyle{labeledStyle}=[shape = circle, minimum size = 6pt, inner sep = 1.2pt ]
\tikzstyle{unlabeledStyle}=[shape = circle, minimum size = 6pt, inner sep = 1.2pt, draw, fill]
\Vertex[style = unlabeledStyle, x = 0.800, y = 0.850, L = \tiny {}]{v0}
\Vertex[style = unlabeledStyle, x = 0.900, y = 0.850, L = \tiny {}]{v1}
\Vertex[style = unlabeledStyle, x = 1.000, y = 0.850, L = \tiny {}]{v2}
\Vertex[style = unlabeledStyle, x = 1.000, y = 0.750, L = \tiny {}]{v3}
\Vertex[style = unlabeledStyle, x = 0.900, y = 0.750, L = \tiny {}]{v4}
\Vertex[style = unlabeledStyle, x = 0.800, y = 0.750, L = \tiny {}]{v5}
\Vertex[style = unlabeledStyle, x = 0.450, y = 0.850, L = \tiny {}]{v6}
\Vertex[style = unlabeledStyle, x = 0.550, y = 0.850, L = \tiny {}]{v7}
\Vertex[style = unlabeledStyle, x = 0.650, y = 0.850, L = \tiny {}]{v8}
\Vertex[style = unlabeledStyle, x = 0.650, y = 0.750, L = \tiny {}]{v9}
\Vertex[style = unlabeledStyle, x = 0.550, y = 0.750, L = \tiny {}]{v10}
\Vertex[style = unlabeledStyle, x = 0.450, y = 0.750, L = \tiny {}]{v11}
\Vertex[style = labeledStyle, x = 0.450, y = 0.885, L = \scriptsize {$1$}]{v12}
\Vertex[style = labeledStyle, x = 0.550, y = 0.885, L = \scriptsize {$2$}]{v13}
\Vertex[style = labeledStyle, x = 0.650, y = 0.885, L = \scriptsize {$3$}]{v14}
\Vertex[style = labeledStyle, x = 0.650, y = 0.715, L = \scriptsize {$4$}]{v15}
\Vertex[style = labeledStyle, x = 0.550, y = 0.715, L = \scriptsize {$5$}]{v16}
\Vertex[style = labeledStyle, x = 0.450, y = 0.715, L = \scriptsize {$6$}]{v17}
\Vertex[style = labeledStyle, x = 0.800, y = 0.885, L = \scriptsize {$7$}]{v18}
\Vertex[style = labeledStyle, x = 0.900, y = 0.885, L = \scriptsize {$8$}]{v19}
\Vertex[style = labeledStyle, x = 1.000, y = 0.885, L = \scriptsize {$9$}]{v20}
\Vertex[style = labeledStyle, x = 1.000, y = 0.715, L = \scriptsize {$10$}]{v21}
\Vertex[style = labeledStyle, x = 0.900, y = 0.715, L = \scriptsize {$11$}]{v22}
\Vertex[style = labeledStyle, x = 0.800, y = 0.715, L = \scriptsize {$12$}]{v23}
\Edge[label = \tiny {}, labelstyle={auto=right, fill=none}](v1)(v0)
\Edge[label = \tiny {}, labelstyle={auto=right, fill=none}](v5)(v0)
\Edge[label = \tiny {}, labelstyle={auto=right, fill=none}](v1)(v2)
\Edge[label = \tiny {}, labelstyle={auto=right, fill=none}](v3)(v2)
\Edge[label = \tiny {}, labelstyle={auto=right, fill=none}](v3)(v4)
\Edge[label = \tiny {}, labelstyle={auto=right, fill=none}](v5)(v4)
\Edge[label = \tiny {}, labelstyle={auto=right, fill=none}](v7)(v6)
\Edge[label = \tiny {}, labelstyle={auto=right, fill=none}](v7)(v8)
\Edge[label = \tiny {}, labelstyle={auto=right, fill=none}](v9)(v8)
\Edge[label = \tiny {}, labelstyle={auto=right, fill=none}](v9)(v10)
\Edge[label = \tiny {}, labelstyle={auto=right, fill=none}](v11)(v6)
\Edge[label = \tiny {}, labelstyle={auto=right, fill=none}](v11)(v10)
\end{tikzpicture}
\caption{The conflict graph, $\Bconf$. We label each vertex with the color the
thread uses on the edge to its parent ``above''.  Applying our algorithm to
this instance of $\Bconf$ yields $\O=\{1,7,8,9,10,11,12\}$.
\label{fig2}}
\end{figure}

First suppose that threads 1 and $t$ are in different components of $\Bconf$.
We begin by putting into $\O$ all threads in the smaller of these components,
and then proceed to the other component, beginning with the thread in $\{1,t\}$. 
If threads 1 and $t$ are in the same component of $\Bconf$, then we start by
putting into $\O$ all vertices on a shortest path in $\Bconf$ from $1$ to $t$,
and thereafter continue growing arbitrarily, such that when the set reaches size
$\ceil{(t+1)/2}$ it satisfies the desired property.  

%
\begin{figure}[!bh]
\centering
\begin{tikzpicture}[scale = 10]
\tikzstyle{VertexStyle} = []
\tikzstyle{EdgeStyle} = []
\tikzstyle{smallish}=[shape=circle, minimum size=1pt, inner sep=0pt]
\tikzstyle{labeledStyle}=[shape = circle, minimum size = 6pt, inner sep = 1.2pt, draw]
\tikzstyle{unlabeledStyle}=[shape = circle, minimum size = 6pt, inner sep = 1.2pt, draw, fill]
\Vertex[style = smallish, x=-.035, y=.650, L=\tiny {}]{v14}
\Vertex[style = smallish, x=1.235, y=.650, L=\tiny {}]{v15}
\Vertex[style = unlabeledStyle, x = 0.250, y = 0.650, L = \tiny {}]{v0}
\Vertex[style = unlabeledStyle, x = 0.350, y = 0.650, L = \tiny {}]{v1}
\Vertex[style = unlabeledStyle, x = 0.450, y = 0.650, L = \tiny {}]{v2}
\Vertex[style = unlabeledStyle, x = 0.550, y = 0.650, L = \tiny {}]{v3}
\Vertex[style = unlabeledStyle, x = 0.650, y = 0.650, L = \tiny {}]{v4}
\Vertex[style = unlabeledStyle, x = 0.150, y = 0.650, L = \tiny {}]{v5}
\Vertex[style = unlabeledStyle, x = 0.750, y = 0.650, L = \tiny {}]{v6}
\Vertex[style = unlabeledStyle, x = 0.850, y = 0.650, L = \tiny {}]{v7}
\Vertex[style = unlabeledStyle, x = 0.950, y = 0.650, L = \tiny {}]{v8}
\Vertex[style = unlabeledStyle, x = 1.050, y = 0.650, L = \tiny {}]{v9}
\Vertex[style = unlabeledStyle, x = 0.050, y = 0.650, L = \tiny {}]{v10}
\Vertex[style = unlabeledStyle, x = 1.150, y = 0.650, L = \tiny {}]{v11}
\Vertex[style = unlabeledStyle, x = 0.600, y = 0.950, L = \tiny {}]{v12}
\Vertex[style = unlabeledStyle, x = 0.600, y = 0.350, L = \tiny {}]{v13}
%
%
\Edge[label = \scriptsize {$1$}, labelstyle={auto=left, pos=.05, fill=none,
xshift=.09in}](v10)(v12)
\Edge[label = \scriptsize {}, labelstyle={auto=left, pos=.05, fill=none,
xshift=.08in}](v5)(v12)
\Edge[label = \scriptsize {$7$}, labelstyle={auto=left, pos=.05, fill=none,
xshift=.06in}](v0)(v12)
\Edge[label = \scriptsize {}, labelstyle={auto=left, pos=.05, fill=none}](v1)(v12)
\Edge[label = \scriptsize {$8$}, labelstyle={auto=left, pos=.05, fill=none}](v2)(v12)
\Edge[label = \scriptsize {}, labelstyle={auto=left, pos=.05, fill=none}](v3)(v12)
\Edge[label = \scriptsize {$11$}, labelstyle={auto=right, pos=.05, fill=none}](v4)(v12)
\Edge[label = \scriptsize {}, labelstyle={auto=right, pos=.05, fill=none}](v6)(v12)
\Edge[label = \scriptsize {$9$}, labelstyle={auto=right, pos=.05, fill=none}](v7)(v12)
\Edge[label = \scriptsize {}, labelstyle={auto=right, pos=.05, fill=none,
xshift=-.08in}](v8)(v12)
\Edge[label = \scriptsize {$10$}, labelstyle={auto=right, pos=.05, fill=none,
xshift=-.1in}](v9)(v12)
\Edge[label = \scriptsize {$12$}, labelstyle={auto=right, pos=.05, fill=none}](v11)(v12)
%
%
\Edge[labelstyle={pos=.9}, label = \scriptsize{4}](v10)(v14)
\Edge[labelstyle={pos=.9}, label = \scriptsize{9}](v11)(v15)
%
%
\Edge[label = \scriptsize {$2$}, labelstyle={auto=right, pos=.05, fill=none,
xshift=.09in}](v10)(v13)
\Edge[label = \scriptsize {}, labelstyle={auto=right, pos=.05, fill=none,
xshift=.08in}](v5)(v13)
\Edge[label = \scriptsize {$8$}, labelstyle={auto=right, pos=.05, fill=none,
xshift=.06in}](v0)(v13)
\Edge[label = \scriptsize {}, labelstyle={auto=right, pos=.05, fill=none}](v1)(v13)
\Edge[label = \scriptsize {$9$}, labelstyle={auto=right, pos=.05, fill=none}](v2)(v13)
\Edge[label = \scriptsize {}, labelstyle={auto=right, pos=.05, fill=none}](v3)(v13)
\Edge[label = \scriptsize {$12$}, labelstyle={auto=left, pos=.05, fill=none}](v4)(v13)
\Edge[label = \scriptsize {}, labelstyle={auto=left, pos=.05, fill=none}](v6)(v13)
\Edge[label = \scriptsize {$10$}, labelstyle={auto=left, pos=.05, fill=none}](v7)(v13)
\Edge[label = \scriptsize {}, labelstyle={auto=left, pos=.05, fill=none,
xshift=-.08in}](v8)(v13)
\Edge[label = \scriptsize {$11$}, labelstyle={auto=left, pos=.05, fill=none,
xshift=-.1in}](v9)(v13)
\Edge[label = \scriptsize {$7$}, labelstyle={auto=left, pos=.05, fill=none}](v11)(v13)
\end{tikzpicture}
\caption{A partial acyclic edge-coloring of $G_B$, with the threads in $\O$ in
position.\label{fig3}}
\end{figure}
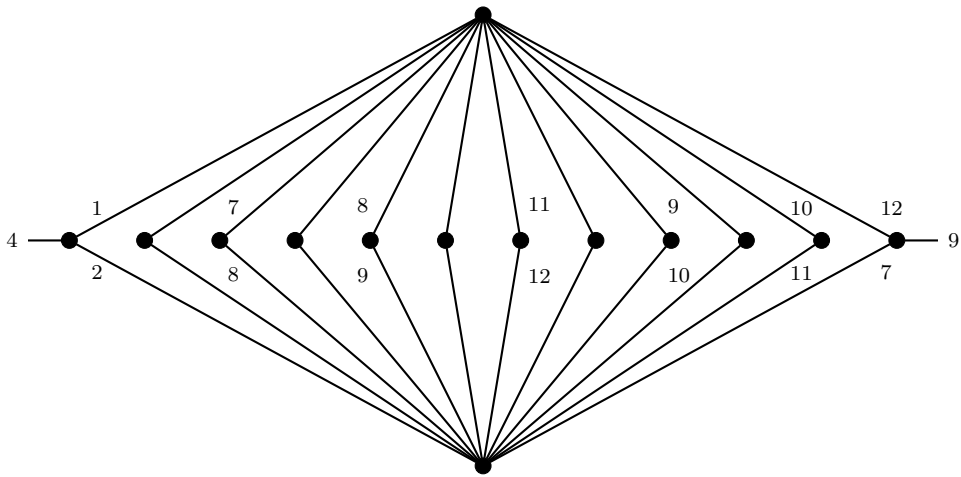

The only exception is if
the shortest path from 1 to $t$ has more than $\ceil{(t+1)/2}$ vertices.  In
this case the component of $\Bconf$ is a path; now we add a single edge in
$\Bconf$ joining its endpoints, and proceed as above, which allows us to take
a shorter path from 1 to $t$, including the edge we just added.  Thus, we have
constructed the desired $\O$.

%
%
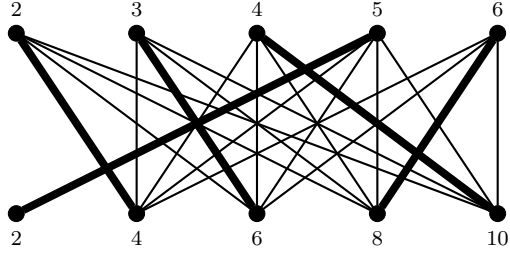
\begin{figure}
\centering
\begin{tikzpicture}[scale = 16]
\tikzset{
    uber thick/.style= {line width=3pt},
}
\tikzstyle{VertexStyle} = []
\tikzstyle{EdgeStyle} = []
\tikzstyle{labeledStyle}=[shape = circle, minimum size = 6pt, inner sep = 1.2pt ]
\tikzstyle{unlabeledStyle}=[shape = circle, minimum size = 6pt, inner sep = 1.2pt, draw, fill]
\Vertex[style = unlabeledStyle, x = 0.500, y = 0.850, L = \tiny {}]{v0}
\Vertex[style = unlabeledStyle, x = 0.600, y = 0.850, L = \tiny {}]{v1}
\Vertex[style = unlabeledStyle, x = 0.700, y = 0.850, L = \tiny {}]{v2}
\Vertex[style = unlabeledStyle, x = 0.800, y = 0.850, L = \tiny {}]{v3}
\Vertex[style = unlabeledStyle, x = 0.900, y = 0.850, L = \tiny {}]{v4}
\Vertex[style = unlabeledStyle, x = 0.900, y = 0.700, L = \tiny {}]{v5}
\Vertex[style = unlabeledStyle, x = 0.800, y = 0.700, L = \tiny {}]{v6}
\Vertex[style = unlabeledStyle, x = 0.700, y = 0.700, L = \tiny {}]{v7}
\Vertex[style = unlabeledStyle, x = 0.600, y = 0.700, L = \tiny {}]{v8}
\Vertex[style = unlabeledStyle, x = 0.500, y = 0.700, L = \tiny {}]{v9}
\Vertex[style = labeledStyle, x = 0.500, y = 0.870, L = \scriptsize {$2$}]{v10}
\Vertex[style = labeledStyle, x = 0.600, y = 0.870, L = \scriptsize {$3$}]{v11}
\Vertex[style = labeledStyle, x = 0.700, y = 0.870, L = \scriptsize {$4$}]{v12}
\Vertex[style = labeledStyle, x = 0.800, y = 0.870, L = \scriptsize {$5$}]{v13}
\Vertex[style = labeledStyle, x = 0.900, y = 0.870, L = \scriptsize {$6$}]{v14}
\Vertex[style = labeledStyle, x = 0.500, y = 0.680, L = \scriptsize {$2$}]{v15}
\Vertex[style = labeledStyle, x = 0.600, y = 0.680, L = \scriptsize {$4$}]{v16}
\Vertex[style = labeledStyle, x = 0.700, y = 0.680, L = \scriptsize {$6$}]{v17}
\Vertex[style = labeledStyle, x = 0.800, y = 0.680, L = \scriptsize {$8$}]{v18}
\Vertex[style = labeledStyle, x = 0.900, y = 0.680, L = \scriptsize {$10$}]{v19}
\Edge[label = \tiny {}, labelstyle={auto=right, fill=none}](v0)(v5)
\Edge[label = \tiny {}, labelstyle={auto=right, fill=none}](v1)(v5)
\Edge[label = \tiny {}, style={uber thick}, labelstyle={auto=right, fill=none}](v2)(v5)
\Edge[label = \tiny {}, labelstyle={auto=right, fill=none}](v4)(v5)
\Edge[label = \tiny {}, labelstyle={auto=right, fill=none}](v0)(v6)
\Edge[label = \tiny {}, labelstyle={auto=right, fill=none}](v1)(v6)
\Edge[label = \tiny {}, labelstyle={auto=right, fill=none}](v2)(v6)
\Edge[label = \tiny {}, style={uber thick}, labelstyle={auto=right, fill=none}](v4)(v6)
\Edge[label = \tiny {}, labelstyle={auto=right, fill=none}](v0)(v7)
\Edge[label = \tiny {}, style={uber thick}, labelstyle={auto=right, fill=none}](v1)(v7)
\Edge[label = \tiny {}, labelstyle={auto=right, fill=none}](v2)(v7)
\Edge[label = \tiny {}, labelstyle={auto=right, fill=none}](v4)(v7)
\Edge[label = \tiny {}, style={uber thick}, labelstyle={auto=right, fill=none}](v0)(v8)
\Edge[label = \tiny {}, labelstyle={auto=right, fill=none}](v1)(v8)
\Edge[label = \tiny {}, labelstyle={auto=right, fill=none}](v2)(v8)
\Edge[label = \tiny {}, labelstyle={auto=right, fill=none}](v4)(v8)
\Edge[label = \tiny {}, labelstyle={auto=right, fill=none}](v5)(v3)
\Edge[label = \tiny {}, labelstyle={auto=right, fill=none}](v6)(v3)
\Edge[label = \tiny {}, labelstyle={auto=right, fill=none}](v7)(v3)
\Edge[label = \tiny {}, labelstyle={auto=right, fill=none}](v8)(v3)
\Edge[label = \tiny {}, style={uber thick}, labelstyle={auto=right, fill=none}](v9)(v3)
\end{tikzpicture}
\caption{The auxiliary graph $H(B,\O)$, with threads on top and positions on
bottom, and a perfect matching shown in bold.\label{fig4}}
\end{figure}

First, we place the threads of $\O$ in the odd positions; second, we place the
remaining threads in the even positions, using Hall's Theorem.
See Figure~\ref{fig3} for an example of these threads in position, and
Figure~\ref{fig4} for the resulting graph $H(B,\O)$.
Let $r$ denote the size of each part in $H(B,\O)$.  Since
$\card{\O}=\ceil{(t+1)/2}$ and $t\ge 11$, we get that $r=\floor{(t-1)/2}\ge 5$.
Recall that at most two threads in $\O$ have neighbors in $\Bconf$ that are not in
$\O$ (and if exactly two, then each has at most one such neighbor).  
We consider five cases, depending on which threads in $\O$ have neighbors in
$B_{conf}$ that are not in $\O$.
\newpage

\textbf{Case 1:} 
Suppose that $1$ and $t$ are the two threads in $\O$ with neighbors in $\Bconf$
that are not in $\O$.  We put threads 1 and $t$ in their positions and we put
the other threads of $\O$ in the odd positions arbitrarily, except that if $t$
is even, then we pick a thread for position $t-1$ that does not conflict with
thread $t$ and does not conflict with the color on $x_tx_{t+1}$ (if it exists);
this is easy, since $t\ge 11$.
Now we must put the remaining threads into the even positions.  At most three
threads are forbidden from position 2, since at most one thread has a color used
on thread 1 and at most two threads have colors used on $x_0x_1$.  Similarly, at
most three threads are forbidden from position $t-1$.  For all other positions,
no threads are forbidden.  
Positions 2 and $t-1$ have
degree at least $r-3\ge 2$ in $H(B,\O)$ and all other positions have degree $r$. 
Thus, by Hall's Theorem, $H(B,\O)$ has a perfect matching.  We now use similar
arguments to handle the other possibilities for which vertices of $\O$ have
neighbors in $\Bconf$ that are not in $\O$.

\textbf{Case 2:} 
Suppose that exactly one of threads $1$ and $t$ has a neighbor in $\Bconf$ that
is not in $\O$.  By symmetry, assume that it is 1.  Further, assume that also
$i\in \O$ and thread $i$ has a neighbor in $\Bconf$ that is not in $\O$ (the
case when no such $i$ exists is easier).  If $t$ is odd, then we put thread $i$
in position $t-2$,
and fill the remaining odd positions arbitrarily from $\O$.  If $t$ is even,
then we put thread $i$ in position $t-2$, and fill odd positions 3 through $t-3$
arbitrarily from $\O$, except that we require that the thread in position $t-3$
not conflict with that in position $t-2$; this is possible, since at most
two threads in $\O$ conflict with thread $i$, and $\card{\O}\ge 7$.  
Note that here we put an element of $\O$ in position $t-2$, but not in position
$t$.
Again, we use
Hall's Theorem to show that $H(B,\O)$ has a perfect matching.  Now positions 2 and
$t-1$ each have degree at least $r-3\ge 2$, and position $t-3$ has degree at least
$r-1\ge 4$.  All other positions have degree $r$.

\textbf{Case 3:} 
Suppose that one of threads $1$ and $t$ has two neighbors in $\Bconf$ that are
not in $\O$, and the other has no such neighbors.  (This will happen when
$\Bconf$ consists of two cycles, each of length $t/2$.)  By symmetry, assume
that thread 1 has two neighbors in $\Bconf$ that are not in $\O$.  We fill the
odd positions arbitrarily with threads from $\O$ (here, and in the remaining
cases, if $t$ is even, then we also require that the thread in position $t-1$
not conflict with thread $t$ or with the color on $x_tx_{t+1}$).  In $H(B,\O)$,
position 2 has degree at least $r-4\ge 1$.  Also, position $t-1$ has degree at
least $r-2\ge 3$.  All other positions have degree $r$.  So $H(B,\O)$ has a
perfect matching.

%
\begin{figure}[!t]
\centering
\begin{tikzpicture}[scale = 10]
\tikzstyle{VertexStyle} = []
\tikzstyle{EdgeStyle} = []
\tikzstyle{smallish}=[shape=circle, minimum size=1pt, inner sep=0pt]
\tikzstyle{labeledStyle}=[shape = circle, minimum size = 6pt, inner sep = 1.2pt, draw]
\tikzstyle{unlabeledStyle}=[shape = circle, minimum size = 6pt, inner sep = 1.2pt, draw, fill]
\Vertex[style = smallish, x=-.035, y=.650, L=\tiny {}]{v14}
\Vertex[style = smallish, x=1.235, y=.650, L=\tiny {}]{v15}
\Vertex[style = unlabeledStyle, x = 0.250, y = 0.650, L = \tiny {}]{v0}
\Vertex[style = unlabeledStyle, x = 0.350, y = 0.650, L = \tiny {}]{v1}
\Vertex[style = unlabeledStyle, x = 0.450, y = 0.650, L = \tiny {}]{v2}
\Vertex[style = unlabeledStyle, x = 0.550, y = 0.650, L = \tiny {}]{v3}
\Vertex[style = unlabeledStyle, x = 0.650, y = 0.650, L = \tiny {}]{v4}
\Vertex[style = unlabeledStyle, x = 0.150, y = 0.650, L = \tiny {}]{v5}
\Vertex[style = unlabeledStyle, x = 0.750, y = 0.650, L = \tiny {}]{v6}
\Vertex[style = unlabeledStyle, x = 0.850, y = 0.650, L = \tiny {}]{v7}
\Vertex[style = unlabeledStyle, x = 0.950, y = 0.650, L = \tiny {}]{v8}
\Vertex[style = unlabeledStyle, x = 1.050, y = 0.650, L = \tiny {}]{v9}
\Vertex[style = unlabeledStyle, x = 0.050, y = 0.650, L = \tiny {}]{v10}
\Vertex[style = unlabeledStyle, x = 1.150, y = 0.650, L = \tiny {}]{v11}
\Vertex[style = unlabeledStyle, x = 0.600, y = 0.950, L = \tiny {}]{v12}
\Vertex[style = unlabeledStyle, x = 0.600, y = 0.350, L = \tiny {}]{v13}
%
%
\Edge[label = \scriptsize {$1$}, labelstyle={auto=left, pos=.05, fill=none,
xshift=.09in}](v10)(v12)
\Edge[label = \scriptsize {$5$}, labelstyle={auto=left, pos=.05, fill=none,
xshift=.08in}](v5)(v12)
\Edge[label = \scriptsize {$7$}, labelstyle={auto=left, pos=.05, fill=none,
xshift=.06in}](v0)(v12)
\Edge[label = \scriptsize {$2$}, labelstyle={auto=left, pos=.05, fill=none}](v1)(v12)
\Edge[label = \scriptsize {$8$}, labelstyle={auto=left, pos=.05, fill=none}](v2)(v12)
\Edge[label = \scriptsize {$3$}, labelstyle={auto=left, pos=.05, fill=none}](v3)(v12)
\Edge[label = \scriptsize {$11$}, labelstyle={auto=right, pos=.05, fill=none}](v4)(v12)
\Edge[label = \scriptsize {$6$}, labelstyle={auto=right, pos=.05, fill=none}](v6)(v12)
\Edge[label = \scriptsize {$9$}, labelstyle={auto=right, pos=.05, fill=none}](v7)(v12)
\Edge[label = \scriptsize {$4$}, labelstyle={auto=right, pos=.05, fill=none,
xshift=-.08in}](v8)(v12)
\Edge[label = \scriptsize {$10$}, labelstyle={auto=right, pos=.05, fill=none,
xshift=-.1in}](v9)(v12)
\Edge[label = \scriptsize {$12$}, labelstyle={auto=right, pos=.05, fill=none}](v11)(v12)
%
%
\Edge[labelstyle={pos=.9}, label = \scriptsize{4}](v10)(v14)
\Edge[labelstyle={pos=.9}, label = \scriptsize{9}](v11)(v15)
%
%
\Edge[label = \scriptsize {$2$}, labelstyle={auto=right, pos=.05, fill=none,
xshift=.09in}](v10)(v13)
\Edge[label = \scriptsize {$6$}, labelstyle={auto=right, pos=.05, fill=none,
xshift=.08in}](v5)(v13)
\Edge[label = \scriptsize {$8$}, labelstyle={auto=right, pos=.05, fill=none,
xshift=.06in}](v0)(v13)
\Edge[label = \scriptsize {$3$}, labelstyle={auto=right, pos=.05, fill=none}](v1)(v13)
\Edge[label = \scriptsize {$9$}, labelstyle={auto=right, pos=.05, fill=none}](v2)(v13)
\Edge[label = \scriptsize {$4$}, labelstyle={auto=right, pos=.05, fill=none}](v3)(v13)
\Edge[label = \scriptsize {$12$}, labelstyle={auto=left, pos=.05, fill=none}](v4)(v13)
\Edge[label = \scriptsize {$1$}, labelstyle={auto=left, pos=.05, fill=none}](v6)(v13)
\Edge[label = \scriptsize {$10$}, labelstyle={auto=left, pos=.05, fill=none}](v7)(v13)
\Edge[label = \scriptsize {$5$}, labelstyle={auto=left, pos=.05, fill=none,
xshift=-.08in}](v8)(v13)
\Edge[label = \scriptsize {$11$}, labelstyle={auto=left, pos=.05, fill=none,
xshift=-.1in}](v9)(v13)
\Edge[label = \scriptsize {$7$}, labelstyle={auto=left, pos=.05, fill=none}](v11)(v13)
\end{tikzpicture}
\caption{The desired acyclic edge-coloring of $G_B$, ready to be extended
greedily to the horizontal edges of $B$.\label{fig5}}
\end{figure}
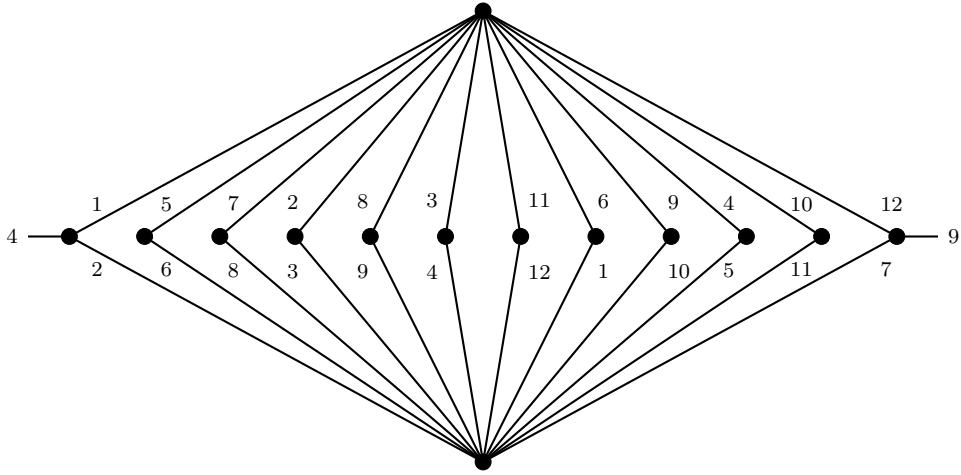

Now we can assume that neither of threads 1 and $t$ has neighbors in $\Bconf$
that are not in $\O$.

\textbf{Case 4:} 
Suppose that some thread, say $i$, in $\O$ has two neighbors in $\Bconf$ that
are not in $\O$.  We put thread $i$ in position 3 and fill the remaining
odd positions arbitrarily from $\O$.
Position $2$ has degree at least $r-4\ge 1$, and position 4 has degree at
least $r-2\ge 3$.  If $t$ is odd, then position $t-1$ has degree at least
$r-2\ge 3$.  All other positions have degree $r$.  So $H(B,\O)$ has a perfect
matching.

\textbf{Case 5:} 
Finally, suppose that two threads, $i$ and $j$ (neither of which is 1 or $t$),
each have a neighbor in $\Bconf$ that is not in $\O$.  Now we put thread $i$ in
position 3 and thread $j$ in position 5, and fill the remaining odd positions
from the rest of $\O$.
Between them, threads $i$ and $j$ forbid at most two threads from position 4
and at most one thread each from positions 2 and 6.  Thus, position 2 has degree
at least $r-3\ge 2$, position 4 has degree at least $r-2\ge 3$, and position 6
has degree at least $r-1\ge 4$.  Once again $H(B,\O)$ has a perfect matching.
%
\end{proof}

\begin{lem}
\label{lem5}
Configuration (C3) cannot appear in a minimal counterexample $G$.
That is, $G$ cannot contain a big vertex $v$ such that $\nf+2\Mns\le 35$, where
$\nf$ and $\Mns$\aside{$\nf$, $\Mns$} denote the numbers of $5^-$-neighbors and
$6^+$-neighbors of $v$ that are in no bunch with $v$ as a parent.
\end{lem}

\begin{proof}
Suppose $G$ is a minimal counterexample that contains such a vertex $v$.
Form \Emph{$G'$} from $G$ by deleting all horizontal edges of long bunches for which
$v$ is a parent.  It suffices to find an acyclic edge-coloring of $G'$ since, by
definition, we can extend it to $G$.  Let \Emph{$B$} be the longest bunch that has $v$
as a parent, and let $w$ be the other parent of this bunch.  Let $x$\aside{$v$,
$w$, $x$} be a bunch
vertex in $B$.  By minimality, we have an acyclic edge-coloring of $G'-x$; we can
greedily extend this to $G'-x+wx$, and we call this coloring $\vph$.  
We construct a set of colors $\C_{good}(v)$ as follows.  
Initially, let $\C_{good}(v)=[k]$.
For each color $\alpha$ used by $\vph$ on an edge $vp$ where $p$ is not a bunch vertex of
some bunch with $v$ as a parent, we do the following.
(Since $\nf+2\Mns\le 35$, we do this at most 35 times.)
Remove from $\C_{good}(v)$ both color $\alpha$ and either (i) all other colors used incident to $p$ or (ii) every
color used on an edge $vu$, whenever $u$ is a 2-vertex in $G'$ incident to an edge colored
with $\alpha$; for each color $\alpha$, we pick either (i) or (ii), giving
preference to the option that removes fewer colors from $\C_{good}$.  Since $\nf+2\Mns\le 35$,
the vertex $v$ is a parent for at most 35 bunches.  This is true because $x_0$
and $x_{t+1}$ are excluded from the bunch.  Thus, each application of (ii)
removes from $\C_{good}(v)$ at most 35 colors. Finally, we remove from
$\C_{good}(v)$ all
colors used on edges incident to $v$ that are in short bunches.  Since each
short bunch has at most 10 threads (by Lemma~\ref{lem4}), and $v$ is a parent for at most 35 bunches,
this removes from $\C_{good}(v)$ at most 350 colors. This completes
the construction of $\C_{good}(v)$.  Note that $\card{\C_{good}(v)}\ge
k-35(35)-35(10)=k-1575$.
Starting from $\vph$, we uncolor all 
edges incident to $v$ that used a color in $\C_{good}(v)$; these are all
edges of threads in bunches with $v$ as a parent.  We will use colors in
$\C_{good}(v)$ to recolor all of the uncolored edges, as well as $vx$ (first
with a proper coloring, and eventually with an acyclic coloring).  
This is the motivation behind our construction of $\C_{good}(v)$.

Suppose that $\vph(wx)$ is already used on some edge $vy$ in bunch $B$.
To avoid creating any 2-colored cycles through $x$,
it suffices to color $vx$ with any color in
$\C_{good}(v)\setminus\{\vph(wx),\vph(wy)\}$, which is easy.  So assume
$\vph(wx)$ is not used on any edge $vy$ in $B$.
(The hardest case is when $\vph(wx)$ is used on some edge
incident to $v$ leading to a non-bunch vertex.  This case motivates most of our
effort, so the reader will do well to keep it in mind.)
Our goal is to find some color, say $\alpha$, other than $\vph(wx)$, such that
$\alpha\in \C_{good}(v)$ and $\alpha$ is already used on an edge $wy$ of $B$. 
Given such an $\alpha$, we use it to color $vx$, and color $vy$ with some color
in $\C_{good}(v)\setminus\{\vph(wx),\alpha\}$.  This ensures that
each of $vx$ and $wx$ will never appear in a 2-colored cycle, no matter how we
further extend the coloring.  Such an $\alpha$ exists by the Pigeonhole
principle, because
$\mbox{length}(B)+\card{\C_{good}(v)}\ge k+2$.  We defer the computation proving
this to the end of the proof.  Now we extend our coloring to a proper (not
necessarily acyclic) $k$-edge-coloring of $G'$, using colors of $\C_{good}(v)$
on the uncolored edges.  This is easy by Hall's Theorem, since each edge has
only one color forbidden: the one already used incident to its endpoint of
degree 2. 

Now we modify this proper edge-coloring to make it acyclic.  It is important to
note that any 2-colored cycle must pass through $v$.  Further, it must use some
edges $e_1,e_2,e_3,e_4$, where $v$ is the common endpoint of $e_2$ and $e_3$ and
the common endpoints of edges $e_1$ and $e_2$ and of edges $e_3$ and $e_4$ are
both 2-vertices (this follows from our construction of $\C_{good}$). 
Suppose that such a 2-colored cycle exits, say with colors $\beta_1,\beta_2$.
One of these colors must be in $\C_{good}(v)$, since the 2-colored cycle did not
exist before assigning these colors; say it is $\beta_1$.  Suppose that a
second such 2-colored cycle exists, with colors $\gamma_1,\gamma_2$; by
symmetry, assume that $\gamma_1\in \C_{good}$.  To fix both cycles, we swap
colors $\beta_1$ and $\gamma_1$ on the edges incident to $v$ where they are
used.
We repeat this process until we have only at most one 2-colored cycle through
$v$.  Suppose we have one, with edges colored $\beta_1,\beta_2$ (and
$\beta_1\in \C_{good}(v)$); when we state the colors on edges of a thread, we
always start with the edge incident to $v$.  Now we look for some other thread 
with edges colored $\gamma_1,\gamma_2$ (and $\gamma_1\in \C_{good}(v)$) such
that no thread incident to $v$ has edges colored $\gamma_2,\beta_1$.  If we find
such a thread, then we swap colors $\beta_1$ and $\gamma_1$ on the edges
incident to $v$ where they appear, and this fixes the 2-colored cycle.  Since
$v$ is a parent in at most 35 bunches,
at most 35 incident threads have edges colored $\gamma_2,\beta_1$, for some
choice of $\gamma_2$.  Further, for each choice of $\gamma_2$, $v$ has at most
35 incident threads colored $\gamma_1,\gamma_2$, for some choice of $\gamma_1$.
Thus, at most $35^2=1225$ of these threads are forbidden.  
Recall from above that $\card{\C_{good}(v)}\ge k-1575$.  Now we have the desired
thread incident to $v$ since $d(v)-1225-1575>0$.  Thus, we can recolor the edge
colored $\beta_1$ to get an acyclic edge-coloring of $G'$, as desired.

Now we prove that $\mbox{length}(B)+\card{\C_{good}(v)}\ge k+2$. 
Note that $k-\card{C_{good}(v)}+2\le 5\nf+\Mns(\nf+\Mns+1-s)+10s+2$, where
\Emph{$s$} is the number of short bunches with $v$ as a parent.  This is
because each short bunch causes us to remove at most 10 colors, each vertex
counted by $\nf$ causes us to remove at most 5 colors, and each counted by
$\Mns$ causes us to remove at most $\nf+\Mns+1-s$ colors.  We must show that the
right side of the latter inequality is at most $\mbox{length}(B)$.  In fact,
we will show that it is no more than the average length of the long bunches
(rounded up).  Since the number of bunches is at most $\nf+\Mns$, we want 
the following inequality to hold.  On the left, the numerator is a lower bound
on the number of vertices in long bunches, and the denominator is an upper bound
on the number of long bunches.  The right side comes from the previous
inequality.
\begin{align*}
\frac{d(v)-(\nf+\Mns+10s)}{\nf+\Mns-s} >
5\nf+\Mns(\nf+\Mns+1-s)+10s)+1,
\end{align*}
which is implied by 
\begin{align*}
d(v)\ge (5\nf+\Mns(\nf+\Mns+1-s)+10s+1)(\nf+\Mns-s+1).
\end{align*}
Since $\nf+2\Mns\le 35$, it suffices to have
\begin{align*}
d(v)\ge (5(35-2\Mns)+\Mns((35-2\Mns)+\Mns+1-s)+10s+1)(35-\Mns-s+1 ).
\end{align*}
If we maximize the right side over all integers $\Mns$ and $s$ such that $0\le
\Mns\le 17$ and $0\le s\le 35-\Mns$ (using nested For loops, for example), then
we get 8680.
\end{proof}

\begin{lem}
\label{lem6}
Configuration (C4) cannot appear in a minimal counterexample $G$.
That is, $G$ cannot contain a very big vertex $v$ such that $\nf+2\Mns\le
141415$, where $\nf$ and $\Mns$ denote the numbers of $5^-$-neighbors and
$6^+$-neighbors of $v$ that are in no bunch with $v$ as a parent.
\end{lem}

\begin{proof}
Most of the proof is identical to that of Lemma~\ref{lem4}, that (C3) cannot
appear in a minimal counterexample.  The only difference is our argument showing
that $\mbox{length}(B)+\card{\C_{good}(v)}\ge k+2$, which we give now.
As in the previous lemma, it suffices to have
\begin{align*}
d(v)\ge (5\nf+\Mns(\nf+\Mns+1-s)+10s+1)(\nf+\Mns-s+1 ).
\end{align*}
By hypothesis, we have 
$\nf\le 141415-2\Mns$.
Now substituting for $\nf$, we get that it suffices to have
\begin{align}
\label{C4-bound}
d(v)\ge
&~(5(141415-2\Mns)+\Mns((141415-2\Mns)+\Mns+1-s)+10s+1)((141415-2\Mns)+\Mns-s+1
)\nonumber \\
= &~
\Mns^3+2\Mns^2s-282822\Mns^2+{\Mns}s^2-282832{\Mns}s \nonumber \\
&+19996363820\Mns-10s^2+707084s+99991859616.
\end{align}
We must upper bound the value of \eqref{C4-bound} over the region where $0\le
\Mns\le 70707$ and $0\le s\le \nf+\Mns-1\le 141415-\Mns$.  Since this domain is
much larger than in the previous lemma, we relax the integrality constraints and
solve a multivariable calculus problem.  The only critical point for this
function is outside the domain, so it suffices to find the maximum along the
boundary.  
This occurs when $s=0$ and $\Mns\approx47134$; the value is
approximately $4.19*10^{14}$.
Recall that $v$ is very big, so we have $d(v)\ge \Delta-4(8680)$.  Since we
need $d(v)\ge 4.19*10^{14}$, it suffices to require that $\Delta\ge
4.2*10^{14}$.  This completes the proof.
\end{proof}

\section*{Acknowledgments}
Thanks to Xiaohan Cheng for proposing acyclic
edge-coloring as a possible problem for the 2016 Rocky Mountain-Great Plains
Graduate Research Workshop in Combinatorics, and thanks to the organizers for
posting the problem write-ups on the website.  Reading that problem
proposal got me started on this research.
Thanks to Howard Community College for their hospitality during the research and
writing of this paper.  The idea of {bunches}, which was crucial to these
results, comes from two papers of Borodin, Broersma, Glebov, and van den
Heuvel~\cite{BBGvdH1,BBGvdH2}.  I am especially grateful that they
prepared English versions of these papers (which originally appeared in
Russian). The main idea in the proof of the structural lemma is that each
$5^-$-vertex is ``sponsored'' by its big neighbors.  This was inspired by a
paper with Marthe Bonamy and Luke Postle~\cite{BCP}. 
Thanks to Beth Cranston for helpful discussion.  Thanks to an anonymous
referee, whose careful reading caught errors and led to numerous small
improvements in the presentation.
Finally, thanks to the National Security Agency for partially
funding this research, under grant H98230-15-1-0013.

\bibliographystyle{plain}
{\scriptsize
\bibliography{GraphColoring}}

\end{document}